\newcommand{\BB}{\mathcal{B}}
\newcommand{\cB}{\mathcal{B}}
\newcommand{\CC}{\mathcal{C}}
\newcommand{\cX}{\mathcal{X}}
\newcommand{\cH}{\mathcal{H}}
\newcommand{\cC}{\mathcal{C}}
\newcommand{\rv}[1]{\mathbf{#1}}
\renewcommand{\:}{\colon}
\newcommand{\e}{\mathrm{e}}
\newcommand{\floor}[1]{\left\lfloor #1 \right\rfloor}
\newcommand{\delete}{\backslash}
\newcommand{\del}{\delete}
\newcommand{\expect}[2][]{\mathbb{E}_{#1}\left[#2\right]}
\newcommand{\var}[2][]{\mathrm{Var}_{#1}\left(#2\right)}
\newcommand{\randbal}{\mathbb{B}}
\newcommand{\bC}{\mathbb{C}}
\newcommand{\bZ}{\mathbb{Z}}
\newcommand{\bF}{\mathbb{F}}
\newcommand{\sym}[1]{\mathcal{S}_{{#1}}}
\newtheorem{theorem}{Theorem}[section]
\newtheorem{lemma}{Lemma}[section]
\newtheorem{claim}{}[lemma]
\newtheorem{corollary}{Corollary}[section]
\newtheorem{conjecture}{Conjecture}
\title{On the number of biased graphs}
\author[Nelson]{Peter Nelson}
\author[van der Pol]{Jorn van der Pol}
\address{University of Waterloo, Waterloo, Canada}
\begin{document}
	
\begin{abstract}
	A \emph{biased graph} is a graph $G$, together with a distinguished subset $\cB$ of its cycles so that no theta-subgraph of $G$ contains precisely two cycles in $\cB$. A large number of biased graphs can be constructed by choosing $G$ to be a complete graph, and $\cB$ to be an arbitrary subset of its Hamilton cycles. We show that, on the logarithmic scale, the total number of simple biased graphs on $n$ vertices does not asymptotically exceed the number that can be constructed in this elementary way.
\end{abstract}	
\maketitle
\section{Introduction}

A \emph{theta-graph} is a graph that is the union of three internally disjoint paths between distinct vertices $u$ and $v$. Such a graph contains exactly three cycles. (i.e. connected $2$-regular subgraphs). 
A \emph{biased graph} is a pair $(G,\cB)$, where $G$ is a finite graph, and $\cB$ is a subset of the collection of cycles of $G$ for which no theta-subgraph of $G$ contains precisely two cycles in $\cB$. We call the cycles in $\cB$ the \emph{balanced} cycles. If $G$ contains no loops or multiple edges, we call $(G,\cB)$ a \emph{simple} biased graph.

 This paper considers the number of simple biased graphs on a fixed $n$-element vertex set. Let $\rv{B}_n$ denote the collection of all simple biased graphs $(G,\cB)$ for which $V(G) = [n] = \{1, \dotsc, n\}$. One can easily see that $\rv{B}_n$ is rather large, even when we restrict to just its members for which $G = K_n$ is a complete graph. For $n \ge 3$, the clique $K_n$ has exactly $\frac{1}{2}(n-1)!$ Hamilton cycles, and no theta-subgraph of $K_n$ contains more than one Hamilton cycle. It follows that $(K_n,\cB)$ is a biased graph whenever $\cB$ contains only Hamilton cycles. There are $2^{\frac{1}{2}(n-1)!}$ such $\cB$, and therefore $|\rv{B}_n| \ge 2^{\frac{1}{2}(n-1)!}$. Our main theorem shows that this lower bound is in fact correct up to lower-order terms in the exponent.\protect\footnote{Throughout this paper, we use $\log$ to denote the binary logarithm.}
\begin{theorem}\label{main}
	$2^{\frac{1}{2}(n-1)!} \le |\rv{B}_n| \le 2^{\frac{1}{2}(n-1)!\left(1 + 21\sqrt{\frac{\log n}{n}}\right)}$ for all $n \ge 3$.
\end{theorem}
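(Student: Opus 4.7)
The plan is to bound $|\rv{B}_n|$ by first absorbing the choice of underlying graph $G \subseteq K_n$ into the error term. Since there are only $2^{\binom{n}{2}} = 2^{O(n^2)}$ choices for $G$ and $\binom{n}{2} \ll \sqrt{\log n/n} \cdot \tfrac{(n-1)!}{2}$, it suffices to bound $f(G) = |\{\cB : (G,\cB) \in \rv{B}_n\}|$ for each $G$ and sum. Split $\cB = \cB_s \sqcup \cB_h$ into short cycles (length $< n$) and Hamilton cycles. As observed in the introduction, no theta-subgraph of a simple graph contains two Hamilton cycles, so when $\cB_s = \emptyset$ the theta condition is vacuous and any subset of the Hamilton cycles of $G$ is admissible; summing over $G$, such biased graphs contribute at most $2^{\binom{n}{2}} \cdot 2^{(n-1)!/2}$, which matches the target.

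The main work is bounding the contribution of biased graphs with $\cB_s \ne \emptyset$. The structural observation is that any theta-subgraph containing a Hamilton cycle $H$ must have one of its three paths consisting of a single edge (otherwise the theta would contain more than $n$ vertices), so the two non-Hamilton cycles of such a theta have the form $C = P \cup \{uv\}$ and $C' = P' \cup \{uv\}$, where $P, P'$ are the two arcs of $H$ between $u$ and $v$. Equivalently, a short cycle $C$ of length $k$ forms a theta with a Hamilton cycle $H$ exactly when $C$ shares $k - 1$ edges (a spanning path of $V(C)$) with $H$ and has one chord of $H$ as its remaining edge; in that case $C \triangle H$ is a short cycle of length $n - k + 2$, and the theta condition forces, given $C \in \cB_s$, the equivalence $H \in \cB_h \iff C \triangle H \in \cB_s$. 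A short cycle of length $k$ forms such a theta with exactly $k \cdot (n-k)!$ Hamilton cycles, so balanced short cycles closer to length $n$ individually pin fewer Hamilton cycles than short balanced cycles of small length.

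The main obstacle is to convert this structural observation into a quantitative estimate $\sum_{\cB_s \ne \emptyset} 2^{\mathrm{free}(\cB_s)} \le 2^{12 \sqrt{\log n/n} \cdot (n-1)!/2}$, where $\mathrm{free}(\cB_s)$ counts Hamilton cycles whose balanced status is not determined by the theta-constraints induced by $\cB_s$. Two regimes must be balanced: small $\cB_s$, where the free Hamilton cycles are abundant but only relatively few $\cB_s$ are possible; and large $\cB_s$, where the Hamilton cycles are heavily constrained but the enumeration of $\cB_s$ becomes costly. Since the total number of short cycles in $K_n$ is of order $(n-1)!$, which already exceeds the target exponent, naive enumeration of $\cB_s$ is inadequate; one must additionally exploit theta-constraints purely among short cycles (thetas with three short cycles) to cut down the set of valid $\cB_s$. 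The error rate $\sqrt{\log n/n}$ is suggestive of a dyadic decomposition, probably over the size or length profile of $\cB_s$, combined with a union bound contributing the $\log n$ factor; a container-style or entropy argument may be needed to handle the combinatorial explosion of possible $\cB_s$ cleanly.
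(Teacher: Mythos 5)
Your proposal gets the easy reductions right (absorbing the $2^{\binom{n}{2}}$ choices of $G$, the lower bound, and the observation that a balanced set consisting only of Hamilton cycles is unconstrained), but the core of the theorem is exactly the step you leave open. Your final paragraph concedes that naive enumeration of the non-Hamilton part $\cB_s$ fails because there are order $(n-1)!$ non-Hamilton cycles, and that ``a container-style or entropy argument may be needed''; that unsupplied argument is the entire content of the paper's proof. Moreover, your decomposition at length $<n$ versus length $n$ is not the right cut: cycles of length $n-1$ or $n-2$ are as numerous as Hamilton cycles, and the dominant interactions one must exploit are thetas among these long non-Hamilton cycles (two $(n-1)$-cycles sharing a long path, etc.), not the Hamilton-versus-chord-cycle thetas your structural observation describes. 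Conditioning on $\cB_s$ and counting ``free'' Hamilton cycles does not address this, so the quantitative estimate you need, $\sum_{\cB_s\ne\emptyset}2^{\mathrm{free}(\cB_s)}\le 2^{12\sqrt{\log n/n}\,(n-1)!/2}$, has no proof route in your write-up.

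For comparison, the paper resolves this in two stages. First, it works in the overlap graph $\Omega_n$ whose vertices are all cycles of $K_n$, with adjacency given by co-membership in a theta; a permutation-encoding (LYM-type) count shows every stable set has size at most $\tfrac12(n-1)!$, a supersaturation lemma shows any set of size $(1+\alpha)\tfrac12(n-1)!$ spans at least $\tfrac{\alpha}{4}n!$ edges, and a container algorithm then bounds the number of \emph{scarce} biased cliques by $2^{\frac12(n-1)!(1+10\sqrt{\log n/n})}$. Second, to pass from scarce cliques to all biased cliques it uses a different notion of ``short'': every theta contains a cycle of length at most $\tfrac23(n+1)$, and there are only about $(n-1)!\,n^2/\lfloor n/3\rfloor!$ such cycles --- factorially fewer than the target error term, unlike your set of all non-Hamilton cycles --- so one can pay $2^{r}$ for the short part and use a reconstruction argument (processing cycles in order of length and invoking the theta-property to recover the long balanced cycles) to show the map to scarce cliques is injective given that data. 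Without analogues of the stable-set bound, the supersaturation lemma, and the container step, your outline cannot reach the claimed exponent.
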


We have made no particular effort to optimize the value of the constant $12$, which can likely be reduced at the expense of increasing the lower bound for $n$. 

Biased graphs were defined as an abstraction of the notion of a graph with group labellings on the edges. These are usually defined in terms of a graph with oriented edges; we give an equivalent definition where the vertex set has a canonical ordering. A \emph{group-labelling} of a graph $G = (V,E)$ with $V = [n]$ is a function $\gamma \: E \to \Gamma$ for some group $\Gamma$. Given such a function, for each cycle $C = v_1v_2\dotsc v_kv_1$ of $G$, let $\sigma(C) = \prod_{i \in \bZ_k} \gamma(v_iv_{i+1})^{a_i}$, where $a_i = 1$ if $v_{i+1} > v_i$ and $a_i = -1$ otherwise. We say that $C$ is \emph{balanced} with respect to $\gamma$ if $\sigma(C)$ is the identity in $\Gamma$; this definition ostensibly depends on the choice of ordering of the vertices in $C$, but turns out to be independent of this choice. It is shown in~\cite{Zaslavsky1989} that the collection  $\cB$  of balanced cycles in this sense obeys the theta-property; i.e.\ $(G,\cB)$ is a biased graph. A biased graph $(G,\cB)$ with $V(G) = [n]$ is \emph{group-labellable} if it arises from a group-labelling of $G$ in this way. Using quite different techniques from those in Theorem~\ref{main}, we bound the number of biased graphs arising from labellings over an abelian group. 

\begin{theorem}\label{abelian}
	For each $n \ge 1$, the number of simple biased graphs with vertex set $[n]$ that arise from an abelian-group-labelling is at most $2^{\frac{1}{4}n^5 \log n}$. 
\end{theorem}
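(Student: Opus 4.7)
The plan is to reduce Theorem~\ref{abelian} to a counting problem for subgroups of a free abelian group, and then bound the number of such subgroups via a subspace-plus-sublattice decomposition combined with Hadamard's inequality.

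Fix a spanning tree $T$ of $K_n$ and let $r = \binom{n-1}{2}$. A switching argument shows that every abelian-group labeling $\gamma \colon E(K_n) \to \Gamma$ is equivalent (on the level of balanced cycles) to one with $\gamma(e) = 0$ for all $e \in T$; such a labeling is determined by its $r$ values on $E \setminus T$ and extends $\bZ$-linearly to a homomorphism $\sigma_\gamma \colon Z_1 \to \Gamma$ from the cycle space of $K_n$. Taking the fundamental cycles $\{C_e : e \notin T\}$ as a basis identifies $Z_1 \cong \bZ^r$, and every simple cycle $C$ has a coordinate vector $\mathbf{a}(C) \in \{-1, 0, 1\}^r$ with at most $n$ nonzero entries; moreover, $C$ is balanced iff $\mathbf{a}(C) \in \ker \sigma_\gamma$. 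Writing $\mathcal{S}$ for the set of all simple-cycle vectors, the biased structure $\cB$ is therefore determined by $\ker \sigma_\gamma \cap \mathcal{S}$.

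The key observation is that the subgroup $L_\cB := \langle \mathbf{a}(C) : C \in \cB \rangle$ of $\bZ^r$ satisfies $L_\cB \cap \mathcal{S} = \{\mathbf{a}(C) : C \in \cB\}$: the inclusion $\supseteq$ is immediate, while $L_\cB \subseteq \ker \sigma_\gamma$ yields $\subseteq$. So $\cB$ is recoverable from $L_\cB$, and it suffices to bound the number of subgroups of $\bZ^r$ that arise in this way --- equivalently, subgroups generated by $\mathcal{S}$-vectors. I parameterize each such $L$ by the pair $(V, L)$, with $V := L \otimes \mathbb{Q}$ its rational span and $L$ viewed as a full-rank sublattice of $V \cap \bZ^r$. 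The span $V$ is determined by any $d = \dim V \le r$ linearly independent elements of $\mathcal{S}$, giving at most $|\mathcal{S}|^r$ possibilities. For fixed $V$, Hadamard's inequality applied to any $d$ linearly independent $\mathcal{S}$-vectors inside $L$ (each of squared Euclidean norm at most $n$), combined with a lower bound of $1$ on the covolume of the primitive sublattice $V \cap \bZ^r$ in $V$, gives $[V \cap \bZ^r : L] \le n^{d/2}$. A standard Hermite-normal-form count then bounds the number of sublattices of $V \cap \bZ^r \cong \bZ^d$ of index at most $n^{d/2}$ by $n^{O(d^2)}$.

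Combining these bounds with the crude estimates $|\mathcal{S}| \le n!$, $r \le n^2/2$, and $d \le r$ yields the required $2^{\frac{1}{4} n^5 \log n}$ bound with room to spare. The main technical obstacle is the Hadamard step: one must verify that the Gram matrix of a $\bZ$-basis of $V \cap \bZ^r$ has determinant at least $1$. This follows from the Smith normal form, since $V \cap \bZ^r$ is a primitive sublattice of $\bZ^r$ and so admits a $\bZ$-basis that extends to a basis of $\bZ^r$; the resulting Gram matrix is a positive-definite integer matrix and hence has determinant $\ge 1$. With this estimate in hand, the remaining counting is routine.
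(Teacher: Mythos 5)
Your proposal is correct in outline, but it takes a genuinely different route from the paper. The paper's proof uses the structure theorem for finitely generated abelian groups to embed the relevant group into $(\bC^\times)^m$, associates to each cycle $C$ a difference-of-monomials polynomial $f_C$, and invokes the R\'onyai--Babai--Ganapathy zero-pattern theorem to bound the number of possible sets of \emph{unbalanced} cycles (each such set being a union of $m$ zero-patterns); this yields the bound $(2n!+1)^{\binom{n}{2}^2}$. You instead linearize over $\bZ$: balance of a cycle is membership of its fundamental-cycle coordinate vector in the kernel of a homomorphism from the cycle lattice $\bZ^r$ to $\Gamma$, so $\cB$ is recovered from the subgroup $L_\cB$ it generates, and you count the possible subgroups by choosing the rational span (spanned by few short $\{-1,0,1\}$-vectors) and then a bounded-index sublattice of its saturation, via Hadamard's inequality plus a Hermite-normal-form count. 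This avoids both the structure theorem and the zero-pattern machinery, and in fact gives a stronger bound (exponent of order $n^4\log n$ rather than $\tfrac14 n^5\log n$), so the claimed inequality follows with room to spare. Two points should be tightened. First, the theorem concerns all simple graphs $G$ on $[n]$, whereas your argument as written only counts abelian-labellable biased \emph{cliques}; this is easily repaired either by running the identical argument inside each fixed $G$ (spanning forest, cycle lattice of $G$) and multiplying by the $2^{\binom{n}{2}}$ choices of $G$, or by noting that any labelling of $G$ extends to $K_n$ without changing the balance of cycles of $G$, so $(G,\cB)$ is determined by the pair $(G,L_\cB)$; the extra factor $2^{\binom{n}{2}}$ is negligible. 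Second, the statement is for all $n\ge 1$, and your numerical slack only kicks in for $n\ge 3$, so $n=1,2$ (graphs with no cycles) need a trivial separate check. Minor remarks: the switching step is not actually needed, since balance already factors through the homomorphism $\bZ^{E}\to\Gamma$ restricted to the cycle lattice, and the covolume lower bound of $1$ holds for every sublattice of $\bZ^r$ (integral Gram determinant), so the primitivity/Smith-normal-form discussion is overkill but correct.
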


In the light of Theorem~\ref{main}, this shows that such graphs are rare indeed. We have not obtained such a theorem for general groups, but we conjecture that group-labellable biased graphs are still rare.

\begin{conjecture}\label{grouplabel}
	If $\rv{H}_n$ is the set of simple group-labellable biased graphs with vertex set $[n]$, then $\lim\limits_{n \to \infty}|\rv{H}_n|/|\rv{B}_n| = 0$.
\end{conjecture}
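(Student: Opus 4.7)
Proof proposal.

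The plan is to count in two stages. First, there are at most $2^{\binom{n}{2}} = 2^{O(n^2)}$ simple graphs $G$ on $[n]$, and this contributes only a lower-order term to the target bound, so the main task is to fix such a $G$ and bound the number of distinct balanced cycle sets $\cB$ arising from abelian-group labellings of $G$.

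Fixing a spanning forest of $G$, the cotree edges yield $r = |E(G)| - |V(G)| + c(G) \le \binom{n}{2}$ fundamental cycles that form a $\bZ$-basis of the cycle space $Z(G) \cong \bZ^r$. With respect to this basis, every cycle $C$ of $G$ has a unique cycle vector $v_C \in \{-1, 0, 1\}^r$. Any $\Gamma$-labelling $\gamma \: E(G) \to \Gamma$ for abelian $\Gamma$ is, up to gauge (adding a $\Gamma$-valued coboundary), the same as a group homomorphism $\tilde\gamma \: Z(G) \to \Gamma$, and a cycle $C$ is balanced iff $v_C$ lies in $K := \ker\tilde\gamma \le \bZ^r$. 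Two labellings thus yield the same biased graph iff their kernels agree on the finite set $V = \{v_C \: C \text{ a cycle of } G\}$, and every subgroup $K \le \bZ^r$ is realizable by taking $\Gamma = \bZ^r/K$. A key reduction: each valid $\cB$ admits the canonical representative $K_\cB := \langle v_C \: C \in \cB\rangle_{\bZ}$, since for any realizing $K$ we have $K_\cB \subseteq K$ and hence $K_\cB \cap V = \cB$. So it suffices to count subgroups of $\bZ^r$ that are generated (as $\bZ$-modules) by some subset of $V \subseteq \{-1,0,1\}^r$.

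To bound this count, I would put each such subgroup $K$ into Hermite (or Smith) normal form. Crucially, because the generators lie in $\{-1, 0, 1\}^r$, Hadamard's inequality bounds every $\rho \times \rho$ subdeterminant of a generator matrix, and hence the entries of the normal form, by $\rho^{\rho/2} \le r^{r/2}$; so the covolume of $K$ in its saturation is small, and the $\bQ$-span of $K$ is spanned by $\rho \le r$ vectors drawn from $V$. Enumerating the normal forms—first choosing the $\bQ$-span (at most $|V|^{r}$ choices, using $|V| \le n!$), then the sublattice of bounded covolume within it—yields the upper bound $2^{O(r^2 \log|V|)}$, which with $r \le \tfrac{1}{2}n^2$ and $\log|V| \le \log(n!)$ comes to roughly $\tfrac14 n^5 \log n$.

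The main obstacle will be the final counting step: executing the enumeration of normal forms carefully enough to land on the constant $\tfrac14$. A completely naive Hermite normal form count (without exploiting sparsity of cycle vectors or the canonical representative reduction) gives an exponent of order $n^6 \log n$, so separating the $\bQ$-span count from the covolume count, and using Hadamard only where needed, is essential to reach the stated bound.
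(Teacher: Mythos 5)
There is a fundamental mismatch between what you prove and what the statement asserts. The statement is Conjecture~\ref{grouplabel}, which concerns biased graphs labellable over \emph{arbitrary} groups, and it is genuinely open: the paper itself does not prove it (it only proves the abelian case, Theorem~\ref{abelian}, and offers the diamond-ring second-moment argument under a simplified random model as weak evidence). Your entire argument is intrinsically abelian: the reduction of a labelling (up to switching by a coboundary) to a homomorphism $\tilde\gamma\colon Z(G)\cong\bZ^r\to\Gamma$, the description of the balanced cycles as $\{C : v_C\in\ker\tilde\gamma\}$, and the use of the classification of finitely generated abelian groups all fail for non-abelian $\Gamma$, where balance of a cycle is governed by a word in a free group rather than by a vector in the cycle space, and where no analogous ``canonical kernel'' reduction is available. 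So even if your lattice-counting were carried out in full, you would obtain a bound of the form $2^{\mathrm{poly}(n)\log n}$ only on the number of \emph{abelian}-group-labellable biased graphs; combined with the lower bound $|\rv{B}_n|\ge 2^{\frac12(n-1)!}$ this gives the conclusion of the conjecture restricted to abelian labellings, but says nothing about $\rv{H}_n$ itself.

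Within its actual scope your sketch is an alternative route to Theorem~\ref{abelian}: the paper instead encodes an abelian labelling into $\bC^\times$-valued coordinates and counts zero-patterns of the cycle polynomials $f_C$ via the R\'onyai--Babai--Ganapathy theorem, which avoids Hermite normal forms and Hadamard bounds entirely and delivers the explicit bound $(2n!+1)^{\binom{n}{2}^2}$. Your approach could plausibly be completed along those lattice-theoretic lines (the canonical representative $K_\cB=\langle v_C : C\in\cB\rangle$ is a sound idea), but you acknowledge that the final enumeration of normal forms is not executed, so even as a proof of the abelian theorem it is incomplete. As a proof of Conjecture~\ref{grouplabel} it is not a near miss but a different (and strictly weaker) statement.
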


\section{The overlap graph}

For each integer $n$, let $K_n$ denote the complete graph with vertex set $[n] = \{1,\dotsc, n\}$, and let $\cC$ be the set of cycles of $K_n$. Let $\Omega_n$ be the graph on vertex set $\cC$, in which two vertices $C,C'$ are adjacent if and only if some theta-subgraph of $K_n$ contains $C$ and $C'$ (or, equivalently, the cycles $C$ and $C'$ intersect in a nonempty path). Call this the \emph{overlap graph} of order $n$.

Summing the number of $(n-k)$-cycles for each $k$, we see that the number of vertices in $\Omega_n$ is $\sum_{k=0}^{n-3} \frac{n!}{2k!(n-k)}$. We will need some estimates on this quantity; to this end, for each $n \ge 3$ define $S_n = \sum_{k=0}^{n-3}\frac{1}{k!(n-k)}$
\begin{lemma}\label{bound_sn}
	$\frac{\e}{n} < S_n < \frac{\e}{n} + \frac{5}{n^2}$ for all $n \ge 5$. 
\end{lemma}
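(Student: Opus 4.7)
The plan is to derive a clean recursion for $S_n$ and then treat the two bounds separately. Multiplying by $n$ and writing $n = (n-k)+k$ in the numerator splits the sum:
\[
nS_n = \sum_{k=0}^{n-3}\frac{n-k}{k!(n-k)} + \sum_{k=0}^{n-3}\frac{k}{k!(n-k)} = \sum_{k=0}^{n-3}\frac{1}{k!} + \sum_{k=1}^{n-3}\frac{1}{(k-1)!(n-k)}.
\]
Reindexing the last sum by $j = k-1$ turns it into $\sum_{j=0}^{(n-1)-3}\frac{1}{j!((n-1)-j)} = S_{n-1}$, valid for $n \ge 4$. Writing $\sum_{k=0}^{n-3}\frac{1}{k!} = \e - R_n$ with $R_n = \sum_{k \ge n-2}\frac{1}{k!}$, this rearranges to
\[
S_n - \frac{\e}{n} = \frac{S_{n-1} - R_n}{n},
\]
which drives both estimates.

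For the lower bound $S_n > \e/n$ it suffices to verify $S_{n-1} > R_n$ for $n \ge 5$. Keeping only the $k = 0$ term of $S_{n-1}$ gives $S_{n-1} \ge \frac{1}{n-1}$, and a geometric-series estimate yields $R_n \le \frac{n-1}{(n-2)\cdot(n-2)!}$. The inequality thus reduces to $(n-2)\cdot(n-2)! > (n-1)^2$, which holds at $n = 5$ ($18 > 16$) and then trivially for larger $n$ by factorial growth.

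For the upper bound I induct on $n \ge 5$. The base case is the direct computation $S_5 = \frac{37}{60} \approx 0.617 < \frac{\e}{5} + \frac{1}{5} \approx 0.744$. In the inductive step, dropping the nonnegative quantity $R_n$ from the recursion gives $S_n \le \frac{\e}{n} + \frac{S_{n-1}}{n}$, and substituting the hypothesis $S_{n-1} \le \frac{\e}{n-1} + \frac{5}{(n-1)^2}$ reduces the target $S_n \le \frac{\e}{n} + \frac{5}{n^2}$, after clearing denominators, to the quadratic inequality $(5 - \e)n^2 + (\e - 15)n + 5 \ge 0$. Its larger root lies just under $5$, so the inequality holds for every integer $n \ge 5$.

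The main difficulty is pinning down the constant $5$ in the upper bound: at $n = 5$ the quadratic above evaluates to only about $0.6$, so the induction is tight, and a smaller constant would demand either a larger base case or a sharper recursion (for instance by retaining the positive term $R_n/n$ rather than discarding it). All remaining work is routine arithmetic.
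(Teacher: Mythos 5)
Your proof is correct and follows essentially the same route as the paper: the identity $nS_n = \sum_{k=0}^{n-3}\frac{1}{k!} + S_{n-1}$, with the tail of the exponential series controlling the error and induction closing the upper bound. The only minor difference is that you get the lower bound directly (comparing $S_{n-1} \ge \frac{1}{n-1}$ against a geometric estimate for $R_n$) rather than inductively from $S_{n-1} > \frac{\e}{n-1}$ as the paper does, which is a small simplification but not a different method.
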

\begin{proof}
	The statement is easy to check when $n = 5$; let $n > 5$. We have 
	\[nS_n = \sum_{k=0}^{n-3} \frac{1}{k!} + \sum_{k=0}^{n-3} \frac{k}{k!(n-k)} = \sum_{k=0}^{n-3}\frac{1}{k!} + \sum_{k=0}^{n-4} \frac{1}{k!(n-1-k)} = \sum_{k=0}^{n-3} \frac{1}{k!} + S_{n-1},\]
	which implies $\e - \frac{2}{(n-3)!} + S_{n-1} < nS_n < \e + S_{n-1}$. Using $\frac{2}{(n-3)!} < \frac{\e}{n-1}$ and induction we have
	$nS_n > \e - \frac{2}{(n-3)!} + S_{n-1} > \e - \frac{\e}{n-1} + \frac{\e}{n-1},$
	which gives the lower bound. 
	
	For the upper bound, first note that $n \ge 6$ implies that $ 2 - \frac{5}{n} > \frac{5}{n-1}$, which gives $\frac{5}{n} - \frac{\e}{n-1} > \frac{5}{n} - \frac{3}{n-1} = \frac{1}{n-1}\left(2-\frac{5}{n}\right) > \frac{5}{(n-1)^2}$.  Using this and induction, we get $nS_n < \e + S_{n-1} < \e + \frac{\e}{n-1} + \frac{5}{(n-1)^2} < \e + \frac{5}{n}$. Therefore $S_n < \frac{\e}{n} + \frac{5}{n^2}$ as required. 
\end{proof}

We use the upper bound in the following estimate freely. 
\begin{corollary}
	For all $n \ge 5$ we have $\frac{e}{2}(n-1)! < |V(\Omega_n)| < 2(n-1)!$. 
\end{corollary}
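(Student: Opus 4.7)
The plan is to reduce the corollary directly to Lemma~\ref{bound_sn} by a single factorization. From the counting formula stated just before the lemma, $|V(\Omega_n)| = \sum_{k=0}^{n-3} \frac{n!}{2k!(n-k)} = \frac{n!}{2} S_n$, so the task is simply to translate the bounds $\frac{\e}{n} < S_n < \frac{\e}{n} + \frac{5}{n^2}$ into bounds on $\frac{n!}{2} S_n$.

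For the lower bound, I multiply both sides of $S_n > \frac{\e}{n}$ by $\frac{n!}{2}$, which gives $|V(\Omega_n)| > \frac{\e}{2}\cdot\frac{n!}{n} = \frac{\e}{2}(n-1)!$ at once. For the upper bound, the same multiplication yields $|V(\Omega_n)| < \frac{\e}{2}(n-1)! + \frac{5}{2n}(n-1)! = \left(\frac{\e}{2} + \frac{5}{2n}\right)(n-1)!$, and I then just need to observe that $\frac{\e}{2} + \frac{5}{2n} < 2$ for all $n \ge 5$, since $\frac{\e}{2} < 1.36$ and $\frac{5}{2n} \le \frac{1}{2}$. This gives $|V(\Omega_n)| < 2(n-1)!$ as required.

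There is no real obstacle here: the only thing to watch is that Lemma~\ref{bound_sn} is stated for $n \ge 5$, which is precisely the range asserted by the corollary, and that the slack $2 - \frac{\e}{2} \approx 0.64$ comfortably absorbs the error term $\frac{5}{2n}$ already at $n=5$. The proof will therefore be just a few lines of arithmetic.
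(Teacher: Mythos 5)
Your proof is correct and follows essentially the same route as the paper: both use the identity $|V(\Omega_n)| = \frac{n!}{2}S_n$, apply Lemma~\ref{bound_sn}, and finish with a trivial numerical check (the paper bounds $\frac{\e}{n}+\frac{5}{n^2} \le \frac{4}{n}$, you equivalently check $\frac{\e}{2}+\frac{5}{2n} < 2$). No gap.
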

\begin{proof}
	As observed, we have $|V(\Omega_n)| = \frac{n!}{2}S_n$. The claimed bounds follow from Lemma~\ref{bound_sn} and the fact that $\frac{e}{n} + \frac{5}{n^2} < \frac{3}{n} + \frac{5}{n^2} \le \frac{4}{n}$ for $n \ge 5$. 
\end{proof}

We say a set $A$ of vertices \emph{spans} an edge $e$ of a graph if $A$ contains both ends of $e$. Call a subset $\cB \subseteq V(\Omega_n)$ \emph{scarce} if it does not span any edge (i.e.\ it is a stable set of $\Omega_n$); equivalently, every theta-subgraph of $K_n$ contains at most one cycle from~$\cB$. Clearly, if $\cB$ is scarce, then $(K_n, \cB)$ is a biased clique; we call a biased clique $(K_n, \cB)$ \emph{scarce} if and only if $\cB$ is scarce.

The set of Hamilton cycles in $K_n$ is scarce. We show that in fact it is the unique scarce biased clique with the maximum number of balanced cycles. Let $\sym{n}$ denote the set of permutations of $[n]$.

\begin{lemma}\label{lemma:lym}
	If $n \ge 3$ and $(K_n,\cB)$ is a scarce biased clique, then $|\cB| \le \tfrac{1}{2}(n-1)!$. If equality holds, then $\cB$ is the set of Hamilton cycles of $K_n$. 
\end{lemma}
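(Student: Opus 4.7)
The plan is to prove the lemma by an LYM-style double-counting argument, using a carefully chosen family of cliques in $\Omega_n$. The key definition will be, for each permutation $\pi = (v_1, \dotsc, v_n)$ of $[n]$, two families of cycles: the \emph{prefix cycles} $C^\pi_k := v_1 v_2 \dotsc v_k v_1$ and the \emph{augmented prefix cycles} $D^\pi_k := v_1 v_2 \dotsc v_{k-1} v_n v_1$, both for $k \in \{3, \dotsc, n\}$. Setting $K_\pi := \{C^\pi_k, D^\pi_k : 3 \le k \le n\}$ (and noting $C^\pi_n = D^\pi_n$), the structure $K_\pi$ will play the role of a ``maximal chain'' in the LYM argument.

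The first step will be to verify that $K_\pi$ is a clique of $\Omega_n$. A direct case analysis shows every pairwise intersection is a nonempty path: for $k < k'$, $C^\pi_k \cap C^\pi_{k'} = v_1 v_2 \dotsc v_k$ and $D^\pi_k \cap D^\pi_{k'} = v_n v_1 v_2 \dotsc v_{k-1}$, while each $C^\pi_k \cap D^\pi_{k'}$ is likewise an initial sub-path of the $\pi$-sequence (possibly joined to the edge $v_n v_1$ when $k = n$). Thus every pair of cycles in $K_\pi$ is adjacent in $\Omega_n$.

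Next I would apply the double count. Scarcity of $\cB$ forces $|\cB \cap K_\pi| \le 1$ for each of the $n!$ permutations, so $\sum_\pi |\cB \cap K_\pi| \le n!$. From the cycle side, a $k$-cycle $C$ with $k < n$ equals $C^\pi_k$ for exactly $2k(n-k)!$ permutations (choose one of $2k$ orderings of $V(C)$ consistent with the cyclic order of $C$, then order the remaining $n-k$ vertices) and independently equals $D^\pi_k$ for another $2k(n-k)!$ permutations; these two sets of $\pi$ are disjoint, because $C^\pi_k$ places $V(C)$ at positions $\{1, \dotsc, k\}$ of $\pi$ while $D^\pi_k$ places it at $\{1, \dotsc, k-1, n\}$. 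A Hamilton cycle instead contributes only $2n$ permutations, since $C^\pi_n = D^\pi_n$. Collecting terms and dividing by $2n$ will yield
\[
|\cB_=| + \sum_{k=3}^{n-1} w_k\,|\cB_k| \le \tfrac{1}{2}(n-1)!,
\]
where $\cB_k := \{C \in \cB : |V(C)| = k\}$, $\cB_=$ is the set of Hamilton cycles in $\cB$, and $w_k := 2k(n-k)!/n$.

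Finally, a short monotonicity check will show that $w_k$ is minimized over $\{3, \dotsc, n-1\}$ at $k = n-1$, where $w_{n-1} = 2 - 2/n > 1$ for $n \ge 3$. Thus $|\cB| = |\cB_=| + \sum_{k<n} |\cB_k| \le |\cB_=| + \sum_{k<n} w_k |\cB_k| \le \tfrac{1}{2}(n-1)!$, and equality forces $|\cB_k| = 0$ for every $k < n$ together with $|\cB_=| = \tfrac{1}{2}(n-1)!$, so $\cB$ is precisely the set of Hamilton cycles of $K_n$. The hard part will be the design of $K_\pi$: using only the prefix chain $\{C^\pi_k\}$ halves the non-Hamilton weights, dropping $w_{n-1}$ below $1$ and yielding only the weaker bound $n(n-2)!/2$; adjoining the twin $D$-family lifts $w_{n-1}$ strictly above $1$, and checking that the enlarged family remains a clique of $\Omega_n$ is the technical heart of the argument.
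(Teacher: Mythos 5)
Your proposal is correct and is essentially the paper's proof viewed from the dual side: the paper assigns to each balanced cycle $C$ the set of permutations in which $C$ occupies positions $1,\dots,|C|$ or positions $2,\dots,|C|+1$, shows these sets are pairwise disjoint by scarcity, and obtains exactly your inequality $n! \ge \sum_{k<n} 4k(n-k)!\,|\cB_k| + 2n|\cB_n|$, which is the same double count you perform over the cliques $K_\pi$. The only cosmetic difference is your choice of the second family (the cycles $D^\pi_k$ through the prefix plus the last position, instead of the paper's window shifted by one), which plays an identical role and carries the same weights.
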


\begin{proof}
	
	For each $3 \le k \le n$, write $\BB_k$ for the set of $k$-cycles in $\BB$, and 
	define functions $\Psi_1 \colon \cB \to 2^{\sym{n}}$ and $\Psi_2\colon (\cB - \cB_n) \to 2^{\sym{n}}$ as
	\begin{align*}
	\Psi_1(C) &= \{\sigma \in \sym{n} \: \text{$(\sigma(1), \ldots, \sigma(|C|))$ is a cyclic ordering of $C$}\}, \\
	\Psi_2(C) &= \{\sigma \in \sym{n} \: \text{$(\sigma(2), \ldots, \sigma(|C|+1))$ is a cyclic ordering of $C$}\}, 
	\end{align*}
	Let $\Psi(C) = \Psi_1(C) \cup \Psi_2(C)$ for $C \in \cB - \cB_n$ and $\Psi(C) = \Psi_1(C)$ for $C \in \cB_n$.

	We claim that the $\Psi$-images of distinct cycles in $\BB$ are disjoint. If this is not the case, then there are cycles $C_1$ and $C_2$, integers $1 \le i \le j \le 2$, and a permutation $\sigma$ such that $\sigma \in \Psi_i(C_1) \cap \Psi_j(C_2)$. We consider three different cases, depending on $(i,j)$.
	
	If $(i,j) = (1,1)$, then $|C_1| \neq |C_2|$ (otherwise $C_1 = C_2$). Let $\ell = \min\{|C_1|, |C_2|\}$. The cycles $C_1$ and $C_2$ intersect in the path $\sigma(1)\sigma(2)\cdots\sigma(\ell)$, and hence are adjacent in $\Omega_n$: a contradiction.
	
	If $(i,j) = (2,2)$, a contradiction follows by a similar argument.
	
	If $(i,j) = (1,2)$, let $\ell = \min\{|C_1|, |C_2| + 1\}$. The cycles $C_1$ and $C_2$ intersect in the path $\sigma(2)\sigma(3)\cdots\sigma(\ell)$, hence are adjacent in $\Omega_n$: again a contradiction.
	
	Thus, $\Psi$ encodes each cycle as a collection of permutations, and these collections are pairwise disjoint, it follows that $\sum_{C \in \BB} |\Psi(C)| \le n!$. As $|\Psi(C)| = 4|C|(n-|C|)!$ if $|C| < n$ and $|\Psi(C)| = 2n$ if $|C| = n$, this yields	
	\[n! \ge \sum_{C \in \cB}|\Psi(C)| = \sum_{k = 3}^{n-1} 4k(n-k)!|\cB_k| + 2n|\cB_n| \]

	Now $4k(n-k)! \ge 4k(n-k) \ge 4(n-1) > 2n$ for all $3 \le k < n$; it follows that $n! \ge 2n\sum_{k=3}^n|\cB_k| = 2n|\cB|$, so $|\cB| \le \tfrac{1}{2}(n-1)!$ as required. If equality holds, then clearly $\cB = \cB_n$. 
\end{proof}

We now prove that every subset of $V(\Omega_n)$ that is significantly larger than the largest stable set necessarily spans a substantial number of edges. 

\begin{lemma}\label{supersat}
	Let $n \ge 12$ be an integer and let $\alpha \ge \frac{88}{n}$. If $\BB \subseteq V(\Omega_n)$ satisfies $|\BB| \ge (1+\alpha)\frac{1}{2}(n-1)!$, then $\BB$ spans at least $\tfrac{\alpha}{8} n!$ edges in $\Omega_n$.
\end{lemma}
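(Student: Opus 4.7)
The plan is to turn the LYM-style counting of Lemma~\ref{lemma:lym} into a supersaturation estimate. Reusing the functions $\Psi, \Psi_1, \Psi_2$ from that proof, define $N(\sigma) := |\{C \in \BB : \sigma \in \Psi(C)\}|$ for each $\sigma \in \sym{n}$ and let $B := \sum_{\sigma} N(\sigma) = \sum_{C \in \BB} |\Psi(C)|$. Since $|\Psi(C)| \ge 2n$ for every cycle $C$ (as shown in the proof of Lemma~\ref{lemma:lym}), the hypothesis gives $B \ge 2n|\BB| \ge (1+\alpha)\, n!$. Cauchy--Schwarz applied to $\sum_\sigma N(\sigma) = B$ over the $n!$ values of $\sigma$ then yields
\[
M \;:=\; \sum_\sigma \binom{N(\sigma)}{2} \;\ge\; \frac{B(B-n!)}{2\,n!} \;\ge\; \frac{\alpha(1+\alpha)}{2}\, n! \;\ge\; \tfrac{\alpha}{2}\, n!.
\]
The case analysis in Lemma~\ref{lemma:lym} shows that whenever $\sigma \in \Psi(C_1) \cap \Psi(C_2)$ for distinct $C_1, C_2$, the pair $\{C_1, C_2\}$ is an edge of $\Omega_n$; hence $M = \sum_{\{C_1, C_2\} \in E(\Omega_n[\BB])} |\Psi(C_1) \cap \Psi(C_2)|$.

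To reach the bound $|E(\Omega_n[\BB])| \ge \tfrac{\alpha}{4}\, n!$ I would sidestep the nuisance of estimating $|\Psi(C_1) \cap \Psi(C_2)|$ on non-Hamilton pairs (where it can be as large as $\Theta((n-k)!)$) and instead count Hamilton-to-non-Hamilton edges directly. Each non-Hamilton cycle $C \in \BB$ of length $k$ has $k(n-k)!$ \emph{Hamilton completions}: Hamilton cycles $H$ obtained by picking an edge $e \in C$ and closing the path $C - e$ through the $n - k$ vertices of $[n] \setminus V(C)$ in some order. For every such $H$, $H \cap C = C - e$, so $\{H, C\} \in E(\Omega_n)$, and distinct choices give distinct Hamilton cycles. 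Summing and using $|\Psi(C)| = 4k(n-k)!$ for non-Hamilton $C$,
\[
\sum_{C \in \BB_{<n}} k\,(n-k)! \;=\; \tfrac{1}{4}\bigl(B - 2n|\BB_n|\bigr) \;\ge\; \tfrac{1}{4}\bigl((1+\alpha)\,n! - n!\bigr) \;=\; \tfrac{\alpha}{4}\, n!,
\]
using $|\BB_n| \le \tfrac{1}{2}(n-1)!$. If every Hamilton completion of every $C \in \BB_{<n}$ lies in $\BB$, this already delivers the desired $\tfrac{\alpha}{4}\, n!$ edges.

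The main obstacle is the ``defect'' case in which many Hamilton cycles are absent from $\BB$. A dual count shows that each Hamilton cycle $H$ is the completion of \emph{exactly} $n(n-3)$ non-Hamilton cycles (one for each subpath of $H$ of length $\ell \in \{2, \ldots, n-2\}$, together with its closing chord), so the number of ``lost'' pairs $(C, H)$ with $H \notin \BB$ is at most $n(n-3)\bigl(\tfrac{1}{2}(n-1)! - |\BB_n|\bigr)$. When $|\BB_n|$ is close to its maximum $\tfrac{1}{2}(n-1)!$ this deficit is negligible. When $|\BB_n|$ is far from the maximum, $|\BB_{<n}|$ is correspondingly larger, and the strict inequality $|\Psi(C)| \ge 4(n-1) > 2n$ for non-Hamilton $C$ (instead of the weaker $\ge 2n$) inflates $B$, and hence $M$, by enough to produce additional collisions in $\sum_\sigma \binom{N(\sigma)}{2}$; these surplus collisions can be cashed in for edges among $\BB_{<n}$ to cover the missing completion edges. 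The numerical balancing of these two regimes---made possible by the hypotheses $n \ge 12$ and $\alpha > (10/n)^2$---is what I anticipate to be the hardest part of the argument.
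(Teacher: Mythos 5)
Your opening moves are sound and indeed parallel to the paper's: encoding cycles by permutations via $\Psi$ and double counting is exactly the device used there (the paper uses a variant $\Phi$ with the second copy anchored at the last positions), and your subsidiary computations are correct --- the Jensen bound $M\ge\frac{\alpha}{2}n!$, the count of $k(n-k)!$ Hamilton completions per non-Hamilton $k$-cycle, the resulting bound $\sum_{C\in\BB_{<n}}k(n-k)!\ge\frac{\alpha}{4}n!$, and the dual count of $n(n-3)$ non-Hamilton cycles per Hamilton cycle. But the proof is not complete, and the missing part is precisely the supersaturation core. First, your ``deficit'' bound $n(n-3)\bigl(\tfrac12(n-1)!-|\BB_n|\bigr)$ only beats the available $\tfrac{\alpha}{4}n!$ completion pairs when $\tfrac12(n-1)!-|\BB_n|$ is at most roughly $\tfrac{\alpha}{4(n-3)}(n-1)!$; since $\alpha$ may be as small as $100/n^2$, this regime requires all but an $O(1/n^3)$-fraction of Hamilton cycles to lie in $\BB$. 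It says nothing, for instance, when $\BB$ consists entirely of $(n-1)$-cycles, which is possible because there are $\frac{n!}{2(n-1)}\ge(1+\alpha)\tfrac12(n-1)!$ of them whenever $\alpha\le\frac1{n-1}$. So essentially the whole burden falls on your second regime.

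Second, that regime is exactly the step you announced you would sidestep. To convert $M=\sum_\sigma\binom{N(\sigma)}{2}$ into an edge count you need an upper bound on $|\Psi(C)\cap\Psi(C')|$ for the adjacent pairs that actually occur, and this quantity is of order $(n-k')!$ when a short cycle sits inside a slightly longer one (your own observation), so a bounded number of edges can already absorb far more than $\alpha n!$ collisions. Making the collision argument honest requires (i) an $O(1)$ per-pair intersection bound for pairs of long cycles (this is the content of the paper's Claim~\ref{claim:P2-small}, which is why the paper restricts to $k\ge\frac{n+2}{2}$, where two cycles sharing a permutation must share at least two consecutive vertices), (ii) separate control of the interaction between $\BB_k$ and $\BB_n$ (the paper's Claim~\ref{claim:Phi-intersection-small} and the resulting bound $\beta_k\le\frac12(1-\beta_n+\alpha)$), and (iii) a proof that short cycles are negligible, which the paper gets from the $\lfloor n/2\rfloor!$ and $S_n$ estimates of Lemma~\ref{bound_sn}; the hypotheses $n\ge12$ and $\alpha>(10/n)^2$ enter only in this final balancing. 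Your proposal defers all of (i)--(iii) to ``the numerical balancing of these two regimes,'' i.e.\ it defers essentially the entire proof; moreover, even granting the collision-to-edge conversion for long cycles, you would still need to splice it together with the completion-edge count without double counting and across mixed length profiles, which your sketch does not address. As it stands, the argument establishes correct but peripheral inequalities and leaves the lemma unproved.
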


\begin{proof}
 Let  $\Omega = \Omega_n$, and suppose for a contradiction that some set $\BB \subseteq V(\Omega)$ satisfies $|\BB| \ge (1+\alpha)\frac{1}{2}(n-1)!$, but $\BB$ spans fewer than $ \tfrac{1}{8}\alpha n!$ edges in $\Omega$.   Note that, since $|V(\Omega)| < 2(n-1)!$, we have $\alpha < 3$. 	
Let $\cC = V(\Omega)$. Define functions $\Phi, \Phi_1, \Phi_2\colon \CC \to 2^{\sym{n}}$ as
	\begin{align*}
	\Phi_1(C) &= \{\sigma \in \sym{n} \: \text{$(\sigma(1), \ldots, \sigma(|C|))$ is a cyclic ordering of $C$}\}, \\
	\Phi_2(C) &= \{\sigma \in \sym{n} \: \text{$(\sigma(n-|C|+1), \ldots, \sigma(n))$ is a cyclic ordering of $C$}\}, 
	\end{align*}
	and $\Phi(C) = \Phi_1(C) \cup \Phi_2(C)$. Note that the sets $\Phi_1(C)$ and $\Phi_2(C)$ are disjoint if $|C| < n$ and equal if $|C| = n$; it follows that $|\Phi(C)| = 4|C|(n-|C|)!$ if $3 \le |C| < n$, and $|\Phi(C)| = 2n$ if $|C| = n$. Note also that for each $k < n$ and $\sigma \in \sym{n}$, there are exactly two $k$-cycles $C$ for which $\sigma \in \Phi(C)$, and there is exactly one $n$-cycle $C$ with $\sigma \in \Phi(C)$. Thus $|\Phi(\cC')| = 2n|\cC'|$ for each set $\cC'$ of $n$-cycles. Finally, observe that $|\Phi(C) \cap \Phi(C')| \le 2$ for all distinct $C,C' \in \cC$ for which $|C| + |C'| > n$.

	For each $3 \le k \le n$, let $\cB_k$ be the set of $k$-cycles in $\cB$, 
	and for each $i \in \{0,1,2\}$, let $P_{k,i}$ be the set of all $\sigma \in \sym{n}$ for which $|\{C \in \BB_k \colon \sigma \in \Phi(C)\}| = i$. Since each $\sigma \in \sym{n}$ is in $\Phi_1(C)$ for at most one $C \in \cB_k$ and is in $\Phi_2(C)$ for at most one $C \in \cB_k$, the sets $P_{k,0}, P_{k,1},P_{k,2}$ partition $\sym{n}$. 

	\begin{claim}\label{claim:P2-small}
		$|P_{k,2}| \le \frac{\alpha}{2} n!$ for each $k \ge \tfrac{n+2}{2}$. 
	\end{claim}
	
	\begin{proof}[Proof of claim]
		Let $\sigma \in P_{k,2}$, and let $C,C' \in \cB_k$ be the distinct cycles for which $\sigma \in \Phi(C) \cap \Phi(C')$. As $C$ and $C'$ intersect in at least $2k-n \ge 2$ consecutive elements, the cycles $C$ and $C'$ intersect in a path of at least two vertices, so are adjacent in $\Omega$. Moreover, we have $|C| + |C'| = 2k > n$, so $|\Phi(C) \cap \Phi(C')| \le 4$. It follows that $\BB_k$ spans at least $\tfrac{1}{4}|P_{k,2}|$ edges. Thus $\tfrac{1}{4}|P_{k,2}| \le \frac{\alpha}{8}n!$, giving the claim. 
	\end{proof}

	\begin{claim}\label{claim:Phi-intersection-small}
		$|\Phi(\BB_k) \cap \Phi(\BB_n)| \le \tfrac{\alpha}{2} n!$ for all $3 \le k < n$.
	\end{claim}
	
	\begin{proof}[Proof of claim]
		Let $C \in \BB_k$, $C' \in \BB_n$, and let $\sigma \in \Phi(C) \cap \Phi(C')$. Since $\sigma(1), \dotsc, \sigma(n)$ is a cyclic ordering of $C'$ and either $\sigma(1), \dotsc, \sigma(|C|)$ or $\sigma(n-|C|+1), \dotsc, \sigma(n)$ is a cyclic ordering of $C$, the cycles $C,C'$ are adjacent in $\Omega$. Since $|C| + |C'| > n$, we have $|\Phi(C) \cap \Phi(C')| \le 4$; it follows that $\Omega$ contains at least $\tfrac{1}{4}|\Phi(\BB_k) \cap \Phi(\BB_n)|$ edges of this form. Thus $|\Phi(\cB_k) \cap \Phi(\cB_n)| \le \tfrac{\alpha}{2}n!$, as required. 
	\end{proof}
	
	Recall that the number of $k$-cycles in $K_n$ is $\tfrac{n!}{2k(n-k)!}$. For $3 \le k \le n$, let $\beta_k = |\BB_k|\frac{2k(n-k)!}{n!}$, so $0 \le \beta_k \le 1$. Note that $|\Phi(\cB_n)| = 2n|\cB_n| = \beta_n n!$. 
	
	\begin{claim}\label{claim:beta-k-bound}
		$\beta_k \le \frac{1}{2}(1-\beta_n + \alpha)$ for all $\tfrac{n+2}{2} \le k < n$. 
	\end{claim}
	
	\begin{proof}[Proof of claim]
		Using Claim~\ref{claim:Phi-intersection-small}, we have
		\[|\Phi(\BB_k)| = |\Phi(\BB_k) \cup \Phi(\BB_n)| + |\Phi(\BB_k) \cap \Phi(\BB_n)| - |\Phi(\BB_n)| \le n! + \tfrac{\alpha}{2} n! - \beta_n n!,\]
		so $|\Phi(\cB_k)| \le (1+\frac{\alpha}{2}-\beta_n)n!$.  Note that 
		\[|P_{k,1}| + 2|P_{k,2}| = \sum_{i=0}^2 i |P_{k,i}| = \sum_{C \in \cB_k} |\Phi(C)| = 4k(n-k)!|\BB_k|\] and that $|P_{k,1}| + |P_{k,2}| = |\Phi(\BB_k)|.$ Using \ref{claim:P2-small}, we have
		\begin{equation*}
			\begin{split}
				2\beta_k n!
					&= 4k(n-k)!|\BB_k| \\
					&= |P_{k,1}| + 2|P_{k,2}| \\
					&= |\Phi(\BB_k)| + |P_{k,2}| \\
					&\le (1+\tfrac{\alpha}{2}-\beta_n)n! + \tfrac{\alpha}{2}n! \\
					&= (1+\alpha-\beta_n)n!.
			\end{split}
		\end{equation*}
		The claim follows immediately upon dividing both sides by $2n!$.
	\end{proof}
	
	It follows from~\ref{claim:beta-k-bound} and the hypothesis that
	\begin{align*}
	\frac{1+\alpha}{2}(n-1)! &\le |\cB| = \sum_{0 \le k \le n-3}|\cB_{n-k}| \\
	&= \frac{n!}{2}\sum_{0 \le k \le n-3}\frac{\beta_{n-k}}{k!(n-k)}\\
	&\le \frac{n!}{2}\left(\sum_{k = \floor{n/2}}^{n-3} \frac{1}{k!(n-k)} + \sum_{k = 1}^{\floor{n/2}-1}\frac{\beta_{n-k}}{k!(n-k)} + \frac{\beta_n}{n}\right)\\
	&\le \frac{n!}{2}\left(\frac{1}{3}\sum_{k \ge \floor{n/2}}\frac{1}{k!} + \frac{1-\beta_n + \alpha}{2}\sum_{k=1}^{\floor{n/2}-1}\frac{1}{k!(n-k)} + \frac{\beta_n}{n}\right)\\
	&\le \frac{n!}{2}\left(\frac{2}{3\floor{n/2}!} + \frac{1-\beta_n + \alpha}{2}\left(S_n-\frac{1}{n}\right) + \frac{\beta_n}{n}\right)\\
	&\le \frac{n!}{2}\left(\frac{1}{n^2}+ \frac{1-\beta_n+\alpha}{2} \left(\frac{\e-1}{n} + \frac{5}{n^2}\right) + \frac{\beta_n}{n}\right),
	\end{align*}
	where the last line uses $\floor{n/2}! > n^2$ (for $n \ge 12$) and Lemma~\ref{bound_sn}. Since $0 \le \beta_n \le 1$ and $\alpha < 3$, this gives
	\[1+ \alpha \le \tfrac{1}{2}(1+\alpha)(\e-1)  + \beta_n\left(1 - \tfrac{\e-1}{2}\right) + \tfrac{11}{n},\]
	and so $\frac{11}{n} \ge \frac{3-\e}{2}(1 + \alpha - \beta_n) \ge \tfrac{3-\e}{2}\alpha$. Using $3-\e > \frac{1}{4}$, we obtain a contradiction to the hypothesis that $\alpha \ge \frac{88}{n}$.
\end{proof}

\section{Scarce biased cliques}

Let $\mathbf{S}_n$ denote the collection of scarce biased cliques on $n$ vertices. In this section we prove the following. 
\begin{theorem}\label{thm:enum-scarce}
	$|\mathbf{S}_n| \le 2^{\frac{1}{2}(n-1)!\left(1 + 19 \sqrt{\frac{\log n}{n}}\right)}$ for all $n \ge 16$. 
\end{theorem}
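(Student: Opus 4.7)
The plan is to apply the Kleitman--Winston container method to the overlap graph $\Omega_n$, using Lemma~\ref{supersat} as the supersaturation input. Write $m := \tfrac{1}{2}(n-1)!$ and $N := |V(\Omega_n)|$, so that $N < 4m$ by the corollary above. Since scarce biased cliques are in bijection with independent sets of $\Omega_n$, it suffices to bound the number of such independent sets.

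Fix $\beta := 10\sqrt{\log n/n}$, which exceeds $(10/n)^2$ for $n \ge 12$. For each independent set $I$ of $\Omega_n$, build a \emph{fingerprint} $F(I) \subseteq V(\Omega_n)$ by the following deterministic procedure. Fix an arbitrary total ordering on $V(\Omega_n)$ once and for all; initialize $C := V(\Omega_n)$ and $F := \emptyset$; then while $|C| > (1+\beta)m$, let $v$ be the vertex of $C$ of maximum degree in $\Omega_n[C]$, breaking ties by the fixed ordering, and either set $F := F \cup \{v\}$ and $C := C \setminus (\{v\} \cup N_{\Omega_n[C]}(v))$ (if $v \in I$) or $C := C \setminus \{v\}$ (if $v \notin I$). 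At termination $I \subseteq F \cup C$ and $|C| \le (1+\beta)m$. The crucial feature is that the choice of $v$ at each step depends only on the current $C$ and on $\Omega_n$, never on $I$; so $F$ alone determines the trajectory and hence the final container $C(F)$, making $I \mapsto F(I)$ effectively an encoding of $I$ into $F(I) \cup C(F(I))$.

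To control $|F(I)|$: while $|C| > (1+\beta)m$, Lemma~\ref{supersat} gives $e(\Omega_n[C]) \ge \tfrac{1}{4}\beta\, n!$, and since $|C| \le N < 4m$ the maximum degree in $\Omega_n[C]$ is at least $\tfrac{\beta n}{4}$. Each iteration with $v \in I$ therefore shrinks $C$ by at least $\tfrac{\beta n}{4} + 1$; other iterations shrink $C$ by exactly one. As the total shrinkage is at most $N - (1+\beta)m < 3m$, we obtain $|F(I)| \le 12m/(\beta n)$. Assembling the enumeration,
\[|\mathbf{S}_n| \le \binom{N}{\le 12m/(\beta n)} \cdot 2^{(1+\beta)m},\]
and applying $\log\binom{N}{\le k} \le k \log(eN/k)$ with $k = 12m/(\beta n)$ and $N \le 4m$ shows that the binomial factor contributes $O(m \sqrt{\log n/n})$ to the exponent. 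Summing and collecting constants yields $\log|\mathbf{S}_n| \le m\bigl(1 + 10\sqrt{\log n/n}\bigr)$ for $n \ge 12$.

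The main obstacle is matching the explicit constant $10$. A direct execution of the argument as above gives a constant slightly above $10$, so some tightening is required. Two natural refinements: (i) sharpen the $|F(I)|$ bound by observing that at $|C| = (1+\alpha)m$ the maximum degree in $\Omega_n[C]$ is actually at least $\alpha n/(1+\alpha)$, yielding $|F(I)| = O((m/n)\log(1/\beta))$ via an integrated estimate; or (ii) shave the constant in $\beta$ slightly to absorb the binomial contribution. Either refinement is routine optimisation once the container framework is set up; the substantive work is the supersaturation already supplied by Lemma~\ref{supersat}.
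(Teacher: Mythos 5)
Your approach is the same as the paper's: encode each stable set of $\Omega_n$ by a Kleitman--Winston-type fingerprint-plus-container pair, using Lemma~\ref{supersat} as the supersaturation input, and then count pairs. The framework, the degree bound $\ge \beta n/4$ obtained from $e(\Omega_n[C]) \ge \tfrac14\beta n!$ and $|C| < 4m$, and the final count $\binom{N}{\le k}2^{(1+\beta)m}$ are all exactly the paper's argument in different notation.

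However, as written the proof does not establish the stated inequality, and this is a genuine (if purely quantitative) gap rather than a routine loose end. With your choice $\beta = 10\sqrt{\log n/n}$ the container term alone is $2^{(1+10\sqrt{\log n/n})m}$, so the exponent budget of the theorem is already exhausted before the fingerprint factor is paid for; no choice of $n$ makes your bound come in under the target (asymptotically you get about $10.6$, and worse for small $n$), as your own closing remark concedes. The remedy is not to ``shave $\beta$ slightly'' but to split the budget from the start, which is what the paper does: take the container threshold $a = \bigl(1+2\sqrt{\tfrac{\log n}{n}}\bigr)\tfrac12(n-1)!$ (the supersaturation hypothesis $\alpha > (10/n)^2$ still holds for $n \ge 12$), so the fingerprint has size at most $s = 4(n-1)!/\sqrt{n\log n}$, and then the key numerical step is $e|\cC|/s \le \tfrac32\sqrt{n\log n} \le n$ for $n \ge 12$, giving $\binom{|\cC|}{\le s} \le n^{s} = 2^{8\sqrt{\log n/n}\cdot\frac12(n-1)!}$; the total exponent is then $\bigl(1 + 2\sqrt{\tfrac{\log n}{n}} + 8\sqrt{\tfrac{\log n}{n}}\bigr)\tfrac12(n-1)!$, i.e.\ exactly the constant $10$. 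Your slightly sharper shrinkage bound (total shrinkage $< 3m$ rather than $4m$) would in fact give a fingerprint contribution of about $6\sqrt{\log n/n}$ and hence a constant $8$, so your framework does recover the theorem once the parameter is chosen correctly --- but that verification, including the check that $eN/k \le n$ for all $n \ge 12$ so that the logarithmic factor is really $\log n$ and not $\tfrac12\log(n\log n)$ plus constants, needs to be carried out rather than deferred to ``routine optimisation.''
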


Our proof is a standard application of the container method; see~\cite{Samotij2015} for an introduction to and background of this technique. The main tool is the following lemma, which essentially allows us to find a concise description of every scarce biased clique $\cB$, in terms of a set $\psi(\cB)$ of size $o((n-1)!)$, and a subset of a set $\phi(\psi(\cB))$ of size $(\frac{1}{2}+o(1))(n-1)!$.

\begin{lemma}\label{lemma:container-tech}
	Let $n \ge 16$ be an integer. Let $\cC = V(\Omega_n)$ and 
	\[s = \frac{4(n-1)!}{\sqrt{n \log n}} \text{\ \ \ and\ \ \ } a = \left(1 + 11\sqrt{\frac{\log n}{n}}\right)\tfrac{1}{2}(n-1)!.\]
	There exist functions $\psi\colon 2^{\CC}\to\binom{\CC}{\le s}$  and $\phi\colon2^{\cC}\to\binom{\CC}{\le a}$ such that each scarce biased clique $(K_n, \BB)$ satisfies $ \BB \subseteq \phi(\psi(\BB))$.
\end{lemma}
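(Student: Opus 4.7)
The plan is to apply the Kleitman--Winston graph container method to the overlap graph $\Omega_n$. Scarce biased cliques are stable sets in $\Omega_n$, and Lemma~\ref{supersat} is exactly the supersaturation statement that this method needs: any subset $\BB \subseteq V(\Omega_n)$ of size noticeably above $\tfrac{1}{2}(n-1)!$ spans many edges, forcing some vertex to have high degree. The container algorithm will greedily peel off such high-degree vertices, recording them in a small fingerprint $F$ while using them to drastically shrink the set $A$ of candidate cycles.

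Concretely, fix a linear order on $\cC$, and initialize $A := \cC$, $F := \emptyset$. Repeatedly let $v$ be the vertex of maximum degree in $\Omega_n[A]$, breaking ties by the order; if $v \in \BB$, add $v$ to $F$ and delete $v$ together with its $\Omega_n[A]$-neighbors from $A$, otherwise merely delete $v$ from $A$. Stop when $|A| + |F| \le a$, and set $\psi(\BB) := F$. For arbitrary $F' \in 2^{\cC}$ define $\phi(F')$ analogously: replay the same loop with the test ``$v \in \BB$?'' replaced by ``$v \in F'$?'', and output the resulting set $A \cup F'$ (falling back to some fixed set of size at most $a$ should this exceed $a$). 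The key structural observation is that when $F' = \psi(\BB)$ the two runs produce identical trajectories, because the selected vertex $v$ at each step satisfies $v \in F'$ if and only if $v \in \BB$ --- exactly the members of $\BB$ get added to the fingerprint at the moment they are selected.

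The containment $\BB \subseteq \phi(\psi(\BB))$ follows from scarcity: each vertex deleted from $A$ is either itself recorded in $F \subseteq \BB$, lies in $N_{\Omega_n}(v)$ for some $v \in F \subseteq \BB$ (and so lies outside $\BB$ by scarcity), or was selected as a non-fingerprint vertex and is therefore not in $\BB$. To bound $|F|$ by $s$ I apply Lemma~\ref{supersat} to $A$. As long as the loop has not terminated we have $|A| > a - |F|$, so, arguing inductively that $|F| \le s$ throughout, $|A| > a - s \ge (1 + \alpha_0)\tfrac{1}{2}(n-1)!$ with $\alpha_0 > (10/n)^2$ for $n \ge 12$. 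The supersaturation lemma then gives $e(\Omega_n[A]) \ge \tfrac{\alpha_0}{4}n!$, so the maximum degree in $\Omega_n[A]$ is at least $2e(\Omega_n[A])/|A| \ge \alpha_0 n/4$, which at the operating threshold is of order $\tfrac{1}{2}\sqrt{n \log n}$. Each fingerprint step thus shrinks $A$ by at least $\tfrac{1}{2}\sqrt{n \log n}$; since the total reduction to $A$ never exceeds $|\cC| < 2(n-1)!$, the number of fingerprint steps is at most $4(n-1)!/\sqrt{n \log n} = s$, closing the induction.

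The main technical obstacle is the parameter bookkeeping at the tail of the run: for Lemma~\ref{supersat} to apply when $|A|$ has fallen nearly to $a$, one needs $a - s$ to correspond to an $\alpha_0$ comfortably above $(10/n)^2$, which reduces to the numerical inequality $2\sqrt{\log n/n} - 8/\sqrt{n \log n} > 100/n^2$ for $n \ge 12$. This holds with plenty of room for large $n$, but its verification near $n = 12$ is tight enough that a slight enlargement of $a$ (absorbed later into the $10\sqrt{\log n/n}$ in Theorem~\ref{thm:enum-scarce}) or a separate check for the smallest $n$ may be required.
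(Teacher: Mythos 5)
Your overall strategy is the same as the paper's (a Kleitman--Winston container argument driven by Lemma~\ref{supersat}), but your parameter bookkeeping contains a genuine gap, and it is more serious than the ``technical obstacle'' you flag. Because you delete the selected vertex $v$ from $A$ even when it joins the fingerprint, you are forced to stop when $|A|+|F|\le a$ and to output $A\cup F'$; consequently Lemma~\ref{supersat} can only be applied at the threshold $a-|F|\ge a-s$, i.e.\ with $\alpha_0=2\sqrt{\log n/n}-8/\sqrt{n\log n}$. But $2\sqrt{\log n/n}\le 8/\sqrt{n\log n}$ is equivalent to $\log n\le 4$, so $\alpha_0\le 0$ for all $12\le n\le 16$: in that range the supersaturation lemma cannot be invoked at all, and the comparison $\alpha_0>100/n^2$ fails for a noticeably larger range (up to roughly $n=27$), not just ``near $n=12$''. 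Moreover, even for those $n$ where $\alpha_0>(10/n)^2$, your count of fingerprint steps does not close: supersaturation at level $\alpha_0$ only yields a maximum degree of at least $\alpha_0 n/4$, which is strictly smaller than $\alpha n/4=\tfrac12\sqrt{n\log n}$, so the argument gives $|F|\le 8(n-1)!/(n\alpha_0)>s$ rather than $|F|\le s$; the induction ``assume $|F|\le s$ throughout'' therefore does not recover the stated bound with these exact constants, and enlarging $a$ changes the statement of the lemma rather than proving it.

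The paper avoids the problem with a slightly different implementation: when the max-degree vertex $C^*_A$ lies in $\BB$ it is added to the fingerprint but only its neighbourhood $N_{\Omega_n}(C^*_A)$ is removed from $A$ --- the vertex itself stays in $A$ --- and the process stops as soon as $|A|\le a$, the container being $A$ itself (no union with the fingerprint is needed, since fingerprint vertices are never deleted, and every deleted vertex is either a selected vertex outside $\BB$ or a neighbour of a $\BB$-vertex, hence outside $\BB$ by scarcity). With this rule supersaturation is always applied at threshold exactly $a=(1+\alpha)\tfrac12(n-1)!$ with $\alpha=2\sqrt{\log n/n}$, each fingerprint step deletes at least $\alpha n/4=\tfrac12\sqrt{n\log n}$ vertices of $A$, and since $|\cC|<2(n-1)!$ this gives $|F|\le s$ on the nose for every $n\ge 12$. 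Your ``replay the algorithm with $F'$ in place of $\BB$'' argument showing $\phi(\psi(\BB))=\phi(\BB)\supseteq\BB$ is exactly the paper's and is fine; it is only the deletion and stopping convention that needs to change.
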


\begin{proof} 
	
	Let $n \ge 16$ and $\alpha = 11\sqrt{\frac{\log n}{n}}$, noting that $\alpha \ge \tfrac{88}{n}$ . Let $\Omega = \Omega_n$, and fix a linear order $\sqsubseteq$ on $\cC = V(\Omega)$. For each set $A \subseteq \CC$, write $C_A^*$ for the vertex of maximum degree in the induced subgraph $\Omega[A]$, where ties are broken using $\sqsubseteq$. Define a function $f\colon 2^{\CC} \times 2^{\CC} \times 2^{\CC} \to 2^{\CC} \times 2^{\CC}$ by

	\begin{equation*}
	f(S,A,K) =
	\begin{cases}
	(S,A) & \text{if $|A| \le a$} \\
	(S\cup\{C_A^*\}, A\setminus N_\Omega(C_A^*)) & \text{if $|A| > a$ and $C_A^* \in K$} \\
	(S, A\setminus\{C_A^*\}) & \text{otherwise}.
	\end{cases}
	\end{equation*}

	For each $\cB \subseteq \cC$, recursively define sequences $S_i = S_i(\cB)$ and $A_i = A_i(\cB)$ by $S_0 = \varnothing$, $A_0 = \CC$, and
	\begin{equation*}
		(S_{i+1}, A_{i+1}) = f(S_i, A_i, \cB).
	\end{equation*}
	Since $A_{i+1} \subseteq A_i$, there exists $i_0$ such that $(S_i, A_i) = (S_{i_0}, A_{i_0})$ for all $i \ge i_0$; define the functions $\psi$ and $\phi$ by $\psi(\cB) = S_{i_0}$ and $\phi(\cB) = A_{i_0}$. 
	
	\begin{claim}
		$|\psi(\cB)| \le a$ and $|\phi(\cB)| \le s$ for all $\cB \subseteq \cC$. 
	\end{claim}
	\begin{proof}[Proof of claim]
		That $|\psi(\cB)| \le a$ follows immediately from the construction. For the second part, note that $|S_{i_0}|$ is equal to the number of $i$ for which $S_{i+1} \ne S_i$. For each such $i$, we have $|A_i| > a$ and $C^*_{A_i} \in \BB$, so Lemma~\ref{supersat} implies that $A_i$ spans at least $\frac{\alpha }{8}n!$ edges of $\Omega$. Therefore $C^*_{A_i}$ has at least $\frac{\alpha n!}{4|A_i|} \ge \frac{\alpha n!}{4|\cC|} \ge \frac{1}{8}n\alpha$ neighbours in $A_i$, and thus $|A_{i+1}| \le |A_i| - \frac{1}{8}n\alpha$. This occurs for each of the $|S_{i_0}|$ distinct values of $i$ for which $S_{i+1} \ne S_i$. Since the sequence $\left(|A_i| \:i \ge 0\right)$ is weakly decreasing, it follows that 
		\[0 \le |A_{i_0}| \le |A_0| - |S_{i_0}|\cdot \tfrac{1}{8}n\alpha \le 2(n-1)! - |S_{i_0}| \tfrac{1}{8}n\alpha,\]
		and so $|\phi(\cB)| = |S_{i_0}| \le \frac{16(n-1)!}{n\alpha} \le s$.
	\end{proof}
	
	If $\cB$ is a scarce biased clique, then, since $\cB$ is a stable set of $\Omega_n$, for each $i$ the elements of $A_i \del A_{i+1}$ are all nonelements of $\cB$; it follows by an inductive argument that $\cB \subseteq A_{i_0} = \phi(\cB)$. 
	
	Now consider the sequences $S_i(\psi(\cB))$ and $A_i(\psi(\cB))$. Another inductive argument shows that they coincide with the sequences $S_i(\cB)$ and $A_i(\cB)$ respectively, and thus $\phi(\psi(\cB)) = \phi(\cB)$, and so $\cB \subseteq \phi(\psi(\cB))$ as required. 
\end{proof}

We are now ready to prove Theorem~\ref{thm:enum-scarce}.

\begin{proof}[Proof of Theorem~\ref{thm:enum-scarce}]
	Obtain $s$ and $a$, as well as functions $\psi\colon 2^{\cC} \to \binom{\cC}{\le a}$ and $\phi\colon 2^{\cC} \to \binom{\cC}{\le s}$ as in Lemma~\ref{lemma:container-tech}. The number of scarce biased cliques $\cB \subseteq V(\Omega_n)$ is at most
	\begin{equation*}
		\left|\left\{(S,A) : S \in \binom{\CC}{\le s}, A \subseteq \phi(S)\right\}\right| \le \binom{|\CC|}{\le s} 2^a.
	\end{equation*}
	
	Using $|\cC| \le 2(n-1)!$ and $n \ge 8$, we have 
	\[\frac{\e |\cC|}{s} \le \frac{6(n-1)!}{s} = \frac{3\sqrt{n \log n}}{2} \le 2^{\log n}.\] Now, using a standard bound on sums of binomial coefficients, we have 
\[
		\binom{|\CC|}{\le s} \le \left(\frac{\e |\CC|}{s}\right)^{s} \le \left(2^{\log n}\right)^{\frac{4(n-1)!}{\sqrt{n \log n}}} =  2^{8\sqrt{\frac{\log n}{n}} \frac{1}{2}(n-1)!}.
\]
	The theorem follows as $a = \left(1+11\sqrt{\frac{\log n}{n}}\right)\frac{1}{2}(n-1)!$.
\end{proof}

\section{Biased graphs}

Let $\mathbf{K}_n$ denote the collection of biased cliques on $n$ vertices. 

\begin{theorem}
	$|\mathbf{K}_n| \le |\mathbf{S}_n| \cdot 2^{(n-1)! \frac{n^2}{6\floor{n/3}!}}$ for all $n$. 
\end{theorem}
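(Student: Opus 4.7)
My plan is to set up an injection $\mathbf{K}_n \hookrightarrow \mathbf{S}_n \times 2^{\cC_s}$, where $\cC_s \subseteq \cC$ is the set of \emph{short} cycles of $K_n$, defined as those of length at most $t := \floor{n/3}$, and $\cC_\ell := \cC \setminus \cC_s$ denotes the \emph{long} cycles. Combined with a direct estimate $|\cC_s| \le (n-1)!\,n^2/(6\,\floor{n/3}!)$, such an injection immediately yields the theorem.

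The short-cycle count is the easy step. Writing $|\cC_s| = \sum_{k=3}^t \frac{n!}{2k(n-k)!}$ and noting that the ratios of consecutive terms $k(n-k)/(k+1)$ are of order $n$, the sum is dominated up to a factor of at most $2$ by its last term $\frac{n!}{2t(n-t)!}$. Comparing this to the target $(n-1)!\,n^2/(6t!)$ reduces to the elementary inequality $6(t-1)! \le n(n-t)!$, which is routine for $t = \floor{n/3}$ and $n$ at least a small constant.

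The candidate injection sends $\cB \mapsto (\cB \cap \cC_\ell,\, \cB \cap \cC_s)$; this is manifestly injective. The content of the argument is to verify that $(K_n, \cB \cap \cC_\ell)$ is a scarce biased clique whenever $(K_n, \cB)$ is biased---equivalently, that no two long cycles of $\cB$ share a theta-subgraph of $K_n$. Granting this, the bound $|\mathbf{K}_n| \le |\mathbf{S}_n| \cdot 2^{|\cC_s|}$ follows and the theorem is immediate.

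Verifying the scarcity of the long part is the technical heart and the main obstacle. If two long cycles $C_1, C_2 \in \cB$ lie in a common theta $T$, then the biased property forces the third cycle $C_3 \in \cB$; the cycle-length identity $|C_1| + |C_2| + |C_3| = 2|E(T)| \le 2(n+1)$ combined with $|C_1|, |C_2| > t$ yields only $|C_3| \le 2(n+1) - 2(t+1) = 2(n-t)$, which for $t = \floor{n/3}$ is around $4n/3$ and does not force $|C_3|$ to be short. In fact one can exhibit thetas in $K_n$ all three of whose cycle lengths exceed $\floor{n/3}$ (for instance, path lengths $(1, \ceil{n/3}, \ceil{n/3}+1)$ produce three cycles each of length at least $\ceil{n/3}+1$), so a pure length argument cannot close the gap. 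Overcoming this will likely require either tracking thetas with all three cycles long separately and charging each ``triply-long'' configuration against an appropriate subset of short cycles, or replacing the decomposition with a more refined canonical one that extracts a genuine scarce core together with controlled correction data within the stated $2^{(n-1)!n^2/(6\floor{n/3}!)}$ budget. This extra bookkeeping is where the real technical work of the proof would lie.
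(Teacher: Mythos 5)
Your proposal has a genuine gap, and you have correctly located it yourself: the map $\cB \mapsto (\cB \cap \cC_\ell, \cB \cap \cC_s)$ does not land in $\mathbf{S}_n \times 2^{\cC_s}$, because the long part of a biased clique need not be scarce. Indeed this failure is not an artifact of your threshold $t = \floor{n/3}$: even with the threshold raised to $\tfrac{2}{3}(n+1)$ (where the cycle-length identity does force at least one short cycle in every theta), a theta can have its two \emph{longest} cycles both balanced, so restricting $\cB$ to long cycles never directly yields a stable set of $\Omega_n$. A side remark on calibration: the budget $(n-1)!\,n^2/(6\floor{n/3}!)$ in the statement is the count of cycles of length at most $\tfrac{2}{3}(n+1)$, the $\floor{n/3}!$ arising as $(n-k)!$ for such $k$; with your smaller threshold $\floor{n/3}$ you are recording far fewer cycles than the budget allows, which is a hint that the intended ``short'' set is larger.

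The missing idea, which is exactly the ``more refined canonical decomposition'' you gesture at, is a pruning map rather than a restriction map. Fix a linear order $\prec$ on $\cC$ refining length, and let $\cC'$ be the cycles of length at most $\tfrac{2}{3}(n+1)$, so that in every theta the $\prec$-smallest of its three cycles lies in $\cC'$. Given a biased clique $\cB$, define $\psi(\cB)$ by deleting, for every theta whose three cycles $C_1 \prec C_2 \prec C_3$ all lie in $\cB$, the cycles $C_1$ and $C_3$. Since the theta property forces each theta to meet $\cB$ in $0$, $1$, or $3$ cycles, $\psi(\cB)$ meets every theta in at most one cycle, i.e.\ it is scarce. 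The real work is then injectivity of $\cB \mapsto (\psi(\cB), \cB \cap \cC')$: if two biased cliques $\cB_1, \cB_2$ agree on $\cC'$ and have the same image under $\psi$, one takes the $\prec$-smallest cycle $C$ on which they differ, say $C \in \cB_1 \setminus \cB_2$; then $C \notin \psi(\cB_1)$, so $C$ was deleted as the largest cycle $C_3$ of some fully balanced theta triple $(C_1, C_2, C_3)$ of $\cB_1$ (it cannot be the smallest, since $C_1 \in \cC'$ and $C \notin \cC'$), and since $C_1, C_2 \prec C$ the two cliques agree there, whence the theta property applied in $\cB_2$ forces $C \in \cB_2$, a contradiction. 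This reconstruction step is precisely the bookkeeping your sketch defers, and without it the claimed bound is not established.
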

\begin{proof}
	Let $\cC$ be the set of cycles of $K_n$ and let $\prec$ be a linear ordering of $\cC$ that refines the partial ordering by length (i.e. $C \prec C'$ whenever $|C| < |C'|$). Note that if $C_1,C_2,C_3$ are the cycles in a $\Theta$-subgraph $H$ of $K_n$ whose degree-$3$ vertices are $u$ and $v$, then each vertex in $H - \{u,v\}$ is in exactly two of the $C_i$, so $\sum_{i=1}^3 |C_i| \le 2|V(H)| + 2 \le 2n+2$. It follows that one of the $C_i$ has length at most $\frac{2}{3}(n+1)$. Let $\cC'$ be the set of cycles in $\cC$ of length at most $\frac{2}{3}(n+1)$ and let $r = |\cC'|$; note that 
	\[r = \sum_{k=3}^{\floor{2(n+1)/3}} \frac{n!}{2k(n-k)!} \le (n-1)!\sum_{k=3}^{\floor{2(n+1)/3}} \frac{n}{6(n-k)!} \le (n-1)! \frac{n^2}{6\floor{n/3}!}.\]

	Let $\cH$ be the collection of all triples $(C_1,C_2,C_3)$ for which $C_1 \prec C_2 \prec C_3$, and $C_1,C_2,C_3$ are the cycles of a $\Theta$-subgraph of $K_n$. By the above, we have $C_1 \in \cC'$. For each biased clique $\cB \subseteq \cC$, let $\psi(\cB)$ be obtained from $\cB$ by removing $C_1$ and $C_3$ for each triple $(C_1,C_2,C_3) \in \cH$ for which $\{C_1,C_2,C_3\} \subseteq \cB$. Since $|\{C_1,C_2,C_3\} \cap \cB| \in \{0,1,3\}$ for all $(C_1,C_2,C_3) \in \cH$, it follows that $|\{C_1,C_2,C_3\} \cap \psi(\cB)| \le 1$ for each $(C_1,C_2,C_3)$, so $\psi(\cB)$ is a scarce biased clique. 
	
	We now show that for each scarce biased clique $\cB'$ and each set $\cX \subseteq \cC'$, there is at most one biased clique $\cB$ for which $\psi(\cB) = \cB'$ and $\cB \cap \cC' = \cX$. It will follow that $|\mathbf{K}_n| \le |\mathbf{S}_n| 2^{|\cC'|} = |\mathbf{S}_n|2^r$, which gives the theorem by our estimate on $r$. 
	
	Suppose that the claimed statement fails, so there is a scarce biased clique $\cB'$ and a pair of distinct biased cliques $\cB_1,\cB_2$ for which $\psi(\cB_1) = \psi(\cB_2) = \cB'$ while $\cB_1 \cap \cC' = \cB_2 \cap \cC'$. Let $\cC''$ be a maximal initial segment of $\cC$ with respect to $\prec$ for which $\cB_1 \cap \cC'' = \cB_2 \cap \cC''$; we have $\cC' \subseteq \cC'' \ne \cC$ by assumption. Let $C \in \cC - \cC''$ be minimal with respect to $\prec$. The maximality in the choice of $\cC''$ implies that $C$ belongs to exactly one of $\cB_1$ and $\cB_2$; say $C \in \cB_1 - \cB_2$. 
	
	Since $\cB = \psi(\cB_2) \subseteq \cB_2$ and $C \notin \cB_2$, we have $C \notin \cB = \psi(\cB_1)$. Since $C \in \cB_1$, there
	is some $(C_1,C_2,C_3) \in \cH$ for which $\{C_1,C_2,C_3\} \subseteq \cB_1$ while $C \in \{C_1,C_3\}$. Since $C_1 \in \cC'$ and $C \notin \cC'$, this gives $C = C_3$. Now $C_1 \prec C_2 \prec C$, giving $\{C_1,C_2\} \subseteq \cC''$, and so $\{C_1,C_2\} \cap \cB_2 = \{C_1,C_2\} \cap \cB_1 = \{C_1,C_2\}$. Therefore $C_1,C_2 \in \cB_2$ and thus $C = C_3 \in \cB_2$ by the theta-property, a contradiction. 
\end{proof}

Since $\frac{n^2}{6\floor{n/3}!} \le \frac{1}{2}\sqrt{\frac{\log n}{n}}$ for $n \ge 18$, combining the above result with Theorem~\ref{thm:enum-scarce} gives the following. 

\begin{theorem}\label{countcliques}
	$|\mathbf{K}_n| \le 2^{\frac{1}{2}(n-1)!\left(1 + 20\sqrt{\frac{\log n}{n}}\right)}$ for all $n \ge 18$. 
\end{theorem}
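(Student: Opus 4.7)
The plan is simply to combine the two preceding bounds and verify one arithmetic inequality. From the preceding theorem we have $|\mathbf{K}_n| \le |\mathbf{S}_n| \cdot 2^{(n-1)!\, n^2/(6\floor{n/3}!)}$, and from Theorem~\ref{thm:enum-scarce} (applicable since $18 \ge 12$) we have $|\mathbf{S}_n| \le 2^{\frac{1}{2}(n-1)!\left(1 + 10\sqrt{\log n / n}\right)}$. Multiplying these bounds and taking the binary logarithm gives
\[
\log|\mathbf{K}_n| \;\le\; \tfrac{1}{2}(n-1)!\left(1 + 10\sqrt{\tfrac{\log n}{n}}\right) + (n-1)!\cdot\tfrac{n^2}{6\floor{n/3}!}.
\]

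After pulling $\tfrac{1}{2}(n-1)!$ out of both summands, the target bound $\tfrac{1}{2}(n-1)!\bigl(1 + 11\sqrt{\log n/n}\bigr)$ reduces to the single inequality
\[
\frac{n^2}{6\floor{n/3}!} \;\le\; \frac{1}{2}\sqrt{\frac{\log n}{n}} \qquad (n \ge 18),
\]
which absorbs the second summand into the bracket at the cost of increasing the constant $10$ to $11$.

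The only genuine work is therefore this arithmetic inequality, and I expect it to be routine. One checks directly that it holds at $n = 18$ (the left side equals $324/4320 \approx 0.075$, while the right side is above $0.2$). For larger $n$, the left-hand side drops super-exponentially whenever $\floor{n/3}$ increments, while the right side decays only polynomially in $n$; examining the ratio of consecutive values through the (at most three) values of $n$ between each such factorial jump shows the inequality is preserved. I do not foresee any real obstacle: this final theorem is a direct bookkeeping corollary of the hard work already done in the section, which is why the bound-improvement effort here is limited to confirming a numerical inequality at $n = 18$ and a monotonicity check beyond it.
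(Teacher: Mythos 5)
Your proposal is correct and is exactly the paper's argument: the paper proves this theorem in one line by combining the preceding theorem with Theorem~\ref{thm:enum-scarce} via the asserted inequality $\frac{n^2}{6\floor{n/3}!} \le \frac{1}{2}\sqrt{\frac{\log n}{n}}$ for $n \ge 18$, which is precisely the reduction you carry out. Your verification of that inequality (check $n=18$, then note the factorial decay dominates the polynomial terms; within each block of three values of $n$ the left side grows only by a factor $\le (20/18)^2$ while ample slack remains) is if anything more explicit than the paper, which states the inequality without justification.
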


Finally, we prove Theorem~\ref{main}.
\begin{proof}[Proof of Theorem~\ref{main}]
	There are exactly $9 < 2^{1 + 21\sqrt{\frac{\log 3}{3}}}$ simple biased graphs on three vertices, so we may assume that $n > 3$. If $(G,\cB)$ is a simple biased graph with $V(G) = [n]$, then $(K_n,\cB)$ is a biased clique on $n$ vertices, since for any theta-subgraph $H$ of $K_n$ containing precisely two cycles in $\cB$, we have $E(H) \subseteq \cup_{C \in \cB} E(C) \subseteq E(G)$, so no such $H$ exists. Since $E(K_n)$ has $2^{\binom{n}{2}}$ subsets, it follows that $|\rv{B}_n| \le 2^{\binom{n}{2}}|\rv{K}_n|$. 
	
	If $n \ge 18$, then using $\binom{n}{2} < \tfrac{1}{2} (n-1)!\sqrt{\frac{\log n}{n}}$, the required bound follows from Theorem~\ref{countcliques}. If $4 \le n \le 17$, then $\frac{\log(n)}{n} \ge \frac{9}{49}$, so $\tfrac{1}{2}\left(1 + 21 \sqrt{\frac{\log n}{n}}\right) \ge 5 > 3$. Using $(n-1)! \ge \binom{n}{2}$, we now have
	\[2^{\frac{1}{2}(n-1)!\left(1 + 21\sqrt{\frac{\log n}{n}}\right)} \ge 2^{3(n-1)!} \ge 2^{2(n-1)! + \binom{n}{2}} \ge 2^{|V(\Omega_n)|} \cdot 2^{\binom{n}{2}} \ge 2^{\binom{n}{2}}|\rv{K}_n|,\]
	giving the bound. 
\end{proof}

\section{Group-labellable biased graphs}

In this section we prove the following stronger version of Theorem~\ref{abelian}:
\begin{theorem}\label{abelian:precise}
	For each $n \ge 3$, the number of simple biased graphs with vertex set $[n]$ that arise from an abelian-group-labelling is at most $(2n! + 1)^{\binom{n}{2}^2}$.
\end{theorem}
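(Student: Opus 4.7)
The plan is to encode every abelian-labellable biased graph $(G,\cB)$ on $[n]$ by a canonical integer matrix with bounded entries, and then count the possibilities.

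First, observe that an abelian group labelling $\gamma\colon E(G)\to A$ extends linearly to a homomorphism $\tilde\gamma\colon\bZ^{E(G)}\to A$, under which a cycle $C$ is balanced precisely when its signed indicator vector $\vec C\in\bZ^{E(G)}$ lies in $\ker\tilde\gamma$. Thus the biased graph depends only on $\ker\tilde\gamma$ intersected with the integer cycle space of $G$, and the canonical choice $K_\cB:=\langle \vec C:C\in\cB\rangle\subseteq\bZ^{E(K_n)}$ always realizes $(G,\cB)$: the biased graph is abelian-labellable precisely when $\cB$ equals the set of simple cycles of $G$ whose signed indicators lie in $K_\cB$. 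The map $(G,\cB)\mapsto K_\cB$ is therefore injective on abelian-labellable biased graphs, and it suffices to count the admissible subgroups $K\subseteq\bZ^{E(K_n)}$.

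Second, each such subgroup $K$ admits a unique Hermite normal form basis, expressible as an integer matrix of size at most $\binom{n}{2}\times\binom{n}{2}$. The key technical claim is that all its entries lie in $[-n!,n!]$. Since $K$ is generated by signed simple-cycle indicator vectors---each a $\{-1,0,+1\}$-vector with at most $n$ nonzero entries---the HNF entries are controlled by the subdeterminants of the generating matrix. A combinatorial argument exploiting that simple cycles of $K_n$ correspond to cyclic orderings of subsets of $[n]$, of which there are at most $n!$ in total across all lengths, should yield this bound.

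The main obstacle is establishing the $n!$ bound on HNF entries. The generic Hadamard inequality gives only the much weaker $n^{\binom{n}{2}/2}$, so the argument must exploit the specific structure of cycle-indicator matrices---perhaps by interpreting the relevant subdeterminants as signed counts of combinatorial objects bounded in number by $n!$, or via a Smith normal form analysis in which the invariant factors of cycle-generated lattices divide a quantity comparable to $n!$. Once this is done, the number of admissible HNF matrices is at most $(2n!+1)^{\binom{n}{2}^2}$, giving the theorem.
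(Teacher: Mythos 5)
Your reduction of abelian-labellability to lattice membership is fine: if $(G,\cB)$ comes from an abelian labelling then $\cB$ is exactly the set of cycles of $G$ whose signed indicator vectors lie in $K_\cB$, and conversely the labelling $e \mapsto e + K_\cB$ in $\bZ^{E}/K_\cB$ realizes such a $\cB$. (A small slip: the map $(G,\cB)\mapsto K_\cB$ is \emph{not} injective --- for example $(G_1,\varnothing)$ and $(G_2,\varnothing)$ are both abelian-labellable and both map to the zero lattice --- so you must also record $G$, which costs an extra factor $2^{\binom{n}{2}}$ and already strains the exact constant you claim; this is fixable but needs saying.) The real problem is the step you yourself flag as ``the main obstacle'': the assertion that the Hermite normal form of $K_\cB$ has all entries bounded by $n!$ is nowhere proved, and nothing in the proposal makes it plausible beyond the wish that ``a combinatorial argument \dots should yield this bound.'' The HNF entries are controlled by determinants of $r\times r$ submatrices of the generator matrix with $r$ as large as $\binom{n}{2}-n+1$; circuit matrices of $K_n$ are not totally unimodular (already in $K_4$ one finds $3\times 3$ minors equal to $2$), and the only general bound available is Hadamard's, of order $n^{\Theta(n^2)}$, which dwarfs $n! \approx 2^{n\log n}$. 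Since the entire count rests on this unestablished (and possibly false) entry bound, the proposal as it stands does not prove the theorem.

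For comparison, the paper takes an entirely different route that sidesteps any determinant or lattice-index estimates. It embeds the (finitely generated) abelian group into $(\bC^{\times})^{m}$, where $m = |E(G)|$, so that the set of \emph{unbalanced} cycles becomes a union of $m$ zero-patterns of the cycle polynomials $f_C$ in the edge variables, and then invokes the R\'onyai--Babai--Ganapathy bound $\binom{MD}{N}$ on the number of zero-patterns, with $M < 2(n-1)!$ polynomials of degree $D \le n$ in $N = m$ variables. There the quantity $2n!$ enters as (number of cycles)$\times$(degree), not as a bound on subdeterminants, which is exactly the quantitative input your lattice approach is missing. If you want to salvage your approach, the burden is to prove a subdeterminant or invariant-factor bound of order $n!$ for lattices generated by signed circuit vectors of $K_n$; absent that, the argument has a genuine gap at its crux.
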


Theorem~\ref{abelian:precise} implies Theorem~\ref{abelian}, since $2n!+1 \le n^n$ for all $n \ge 3$.


Let $I$ be a finite set, and let $\mathbf{f} = (f_i : i \in I)$ be a tuple of polynomials in the variables $X_1, X_2, \ldots, X_N$ with coefficients in a field $\bF$. A set $S \subseteq I$ is a \emph{zero-pattern} arising from $\mathbf{f}$ if there exists $w \in \bF^N$ such that $S = \{i \in I : f_i(w) \neq 0\}$. In this case, we call $w$ a \emph{witness} for $S$.

Write $Z_{\bF}(\mathbf{f})$ for the set of zero-patterns arising from $\mathbf{f}$. The following result, due to R\'{o}nyai, Babai, and Ganapathy~\cite{RonyaiBabaiGanapathy2001}, bounds the cardinality of $Z_{\bF}(\mathbf{f})$.

\begin{theorem}\label{zeropatterns}
	Let $\mathbf{f}$ be an $M$-tuple of polynomials in the variables $X_1, X_2, \ldots, X_N$ over a field $\bF$. If each of the polynomials has degree at most $D$, then $\left|Z_{\bF}(\mathbf{f})\right| \le \binom{MD}{N}$.
\end{theorem}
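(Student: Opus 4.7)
The plan is to follow the standard linearization strategy of R\'onyai--Babai--Ganapathy: to each zero-pattern I will attach a polynomial in $\bF[X_1, \ldots, X_N]$, then show these polynomials are linearly independent via a triangular matrix of evaluations, and finally bound their number by the dimension of an appropriate subspace of $\bF[X_1, \ldots, X_N]$.

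First I would enumerate the zero-patterns as $S_1, \ldots, S_L$, choose a witness $w_k \in \bF^N$ for each $S_k$, and define the certificate polynomial $g_k = \prod_{i \in S_k} f_i$. By construction $g_k(w_k) \neq 0$, because every factor is nonzero at $w_k$; on the other hand, $g_k(w_j) = 0$ whenever $S_k \not\subseteq S_j$, since then some factor $f_i$ of $g_k$ satisfies $i \notin S_j$ and hence $f_i(w_j) = 0$.

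Next I would reorder the patterns so that $|S_1| \ge |S_2| \ge \cdots \ge |S_L|$. For $k < j$, the inequality $|S_k| \ge |S_j|$ together with $S_k \neq S_j$ forces $S_k \not\subseteq S_j$, so $g_k(w_j) = 0$. The evaluation matrix $\bigl(g_k(w_j)\bigr)_{k,j \in [L]}$ is therefore lower triangular with nonzero diagonal, which immediately yields linear independence of $g_1, \ldots, g_L$ as elements of $\bF[X_1, \ldots, X_N]$. Since each $g_k$ is a product of at most $M$ factors of degree at most $D$, we have $\deg g_k \le MD$, so $L$ is bounded above by the dimension of any subspace of $\bF[X_1, \ldots, X_N]$ containing every $g_k$.

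The main obstacle is the final quantitative step: identifying a containing subspace whose dimension is exactly $\binom{MD}{N}$, rather than the naive $\binom{MD + N}{N}$ that comes from all polynomials of total degree at most $MD$. Extracting the sharper constant requires using extra structure of the $g_k$ beyond their degree. The device I would use is to homogenize each $g_k$ with an auxiliary variable $X_0$ to obtain $\widetilde g_k$ of pure degree $MD$ in $N+1$ variables (linear independence being preserved, as it already holds after dehomogenization), and then, possibly after a generic linear change of the $X_1, \ldots, X_N$ (passing to an extension field if $\bF$ is too small), argue that the leading monomials of the $\widetilde g_k$ lie in a carefully chosen subspace of degree-$MD$ forms of dimension $\binom{MD}{N}$---for instance, those monomials $X_0^{a_0} X_1^{a_1} \cdots X_N^{a_N}$ with $\sum a_i = MD$ and $a_0 \ge N$, which indeed has dimension $\binom{MD}{N}$ by a hockey-stick identity. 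Verifying that every $\widetilde g_k$ can be represented in such a subspace is the only point requiring real care; once that is in hand, the theorem follows immediately from the triangular evaluation argument above.
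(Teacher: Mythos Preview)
The paper does not prove this theorem; it is quoted from R\'onyai--Babai--Ganapathy and used as a black box, so there is no in-paper proof to compare against. Your triangular-evaluation argument is exactly their proof, and it correctly produces linearly independent polynomials $g_1,\dots,g_L$ of degree at most $MD$ in $N$ variables, hence the bound $L\le\binom{MD+N}{N}$---which is in fact the bound actually established in that reference.

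The gap is in your final sharpening step, and it is not merely a matter of care: the stated bound $\binom{MD}{N}$ is \emph{false} in general. Take $M=D=N=1$; a single nonconstant linear polynomial in one variable over any field with at least two elements yields the two zero-patterns $\varnothing$ and $\{1\}$, yet $\binom{MD}{N}=\binom{1}{1}=1$. So no homogenization or change of coordinates can force all the $g_k$ into a space of dimension $\binom{MD}{N}$; the theorem as printed is simply missing a ``$+N$'' in the upper index. Concretely, your proposed subspace of degree-$MD$ forms with $a_0\ge N$ cannot contain $\widetilde g_k$ whenever $g_k$ has a monomial of full degree $MD$ (as happens, e.g., when $S_k=[M]$ and every $f_i$ has degree exactly $D$), since then $\widetilde g_k$ has a monomial with $a_0=0$, and a linear change of only $X_1,\dots,X_N$ does nothing to the $X_0$-exponent. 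For the paper's application the slip is harmless---there $N=m\le\binom{n}{2}$ while $MD$ is of order $n!$, so replacing $\binom{MD}{N}$ by $\binom{MD+N}{N}$ leaves the estimates toward Theorem~\ref{abelian:precise} and Theorem~\ref{abelian} intact---but a proof of the statement exactly as written cannot be completed.
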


Define an $\binom{n}{2}$-tuple of variables $X = (X_e : e \in E(K_n))$. For a cycle $C = v_1v_2\ldots v_kv_1$ of $K_n$, let $f_C \in \bC[X]$ be the polynomial
\begin{equation*}
	f_C(X) = \prod_{i \in \bZ_k, v_i > v_{i+1}} X_{v_iv_{i+1}} - \prod_{i \in \bZ_k, v_i < v_{i+1}} X_{v_iv_{i+1}}.
\end{equation*}

For each simple graph $G$ with vertex set $[n]$, let $\mathbf{f}_G$ denote the tuple comprising $f_C$ for each cycle $C$ of $G$.
Note that $|\mathbf{f}_G| < 2(n-1)!$, as $G$ has at most $|V(\Omega_n)|$ cycles.
The following observation links group-labellable biased graphs with zero-patterns.

\begin{lemma}\label{abelian-zeropattern}
	Let $G = (V,E)$ be a simple graph with $V = [n]$ and $|E| = m$, and let $\cC$ be the set of cycles of $G$. If $(G,\cB)$ is a biased graph that arises from an abelian-group-labelling, then there exist $P_1, \dotsc, P_m \in Z_{\bC}(\mathbf{f}_G)$ such that $\cC - \cB = \bigcup P_i$.
\end{lemma}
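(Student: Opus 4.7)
The plan is to translate the group-theoretic balanced condition into a union of polynomial zero-conditions over $\bC$, one per cyclic summand of the label subgroup. Let $\gamma\:E\to\Gamma$ be an abelian-group-labelling that realises $(G,\cB)$, and replace $\Gamma$ by the subgroup $H = \langle\gamma(E)\rangle$; since $H$ is abelian and generated by at most $m$ elements, the structure theorem for finitely generated abelian groups gives a decomposition $H \cong H_1 \oplus \dotsb \oplus H_s$ into cyclic summands with $s \le m$.

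For each $j \in \{1,\dotsc,s\}$ I would fix an injective homomorphism $\iota_j\:H_j \hookrightarrow \bC^*$ (a generator of a finite $H_j$ of order $d$ maps to $\e^{2\pi i/d}$; a generator of an infinite cyclic $H_j$ maps to $2$, say), let $\pi_j\:H\to\bC^*$ be the composition of $\iota_j$ with the projection onto the $j$-th summand, and define $x^{(j)} \in \bC^E$ by $x^{(j)}_e = \pi_j(\gamma(e))$. A short computation gives $f_C(x^{(j)}) = M_j(C)\bigl(1 - \pi_j(\sigma_\gamma(C))\bigr)$, where $M_j(C) \neq 0$ is the monomial built from the edges $v_iv_{i+1}$ of $C$ with $v_i > v_{i+1}$; this holds because $\pi_j\circ\gamma$ is itself an abelian-group-labelling, so $\sigma_{\pi_j\circ\gamma}(C) = \pi_j(\sigma_\gamma(C))$. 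Hence $f_C(x^{(j)}) = 0$ if and only if $\pi_j(\sigma_\gamma(C)) = 1$.

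Because each $\iota_j$ is injective and $H = \bigoplus_j H_j$, the maps $\pi_1,\dotsc,\pi_s$ jointly separate points of $H$, so $C$ is balanced if and only if $\pi_j(\sigma_\gamma(C)) = 1$ for every $j$. This yields $\cC - \cB = \bigcup_{j=1}^{s} P_j$, where $P_j = \{C \in \cC\:f_C(x^{(j)}) \neq 0\} \in Z_{\bC}(\mathbf{f}_G)$ is witnessed by $x^{(j)}$; if $s<m$, pad the family by repeating some $P_j$ (or by adjoining the empty zero-pattern, which is witnessed by $0 \in \bC^E$ since each $f_C$ has no constant term) to obtain exactly $m$ sets. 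The only nontrivial step is the opening observation that $H$ splits into $s \le m$ cyclic factors, each of which embeds multiplicatively in $\bC^*$; once this is set up, the identification of unbalanced cycles with nonvanishing of $f_C$ and the assembly into zero-patterns are immediate.
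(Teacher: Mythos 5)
Your proposal is correct and follows essentially the same route as the paper: restrict to the subgroup generated by $\gamma(E)$, apply the structure theorem to embed it into $(\bC^{\times})^m$ coordinatewise, and use the coordinate evaluations $\pi_j\circ\gamma$ as witnesses for the zero-patterns $P_j$, whose union is $\cC-\cB$ because the projections jointly detect the identity. The only cosmetic difference is that the paper pads with trivial summands to get exactly $m$ coordinates, while you pad the list of zero-patterns directly (repetition or the empty pattern), which is equally valid; your explicit computation $f_C(x^{(j)})=M_j(C)\bigl(1-\pi_j(\sigma_\gamma(C))\bigr)$ is the verification the paper leaves implicit.
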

\begin{proof}
	Let $\gamma\colon E \to \Gamma$ be an abelian-group-labelling of $G$ that gives rise to $\cB$. Since $|E| = m$, we can restrict $\Gamma$ to the subgroup $\Gamma'$ generated by the image $\gamma(E)$, which, by the fundamental theorem of finitely generated abelian groups, has the form $\Gamma' \cong \bZ^{s} \oplus \bZ_{q_1} \oplus \dotsc \oplus \bZ_{q_t}$, where $s + t \le m$, while $q_1, \dotsc, q_t \ge 1$ are integers. By including some trivial groups, we may assume that $s + t = m$.
	
	Since each cyclic group is a subgroup of $\bC^{\times}$, the group $\Gamma'$ is a subgroup of $(\bC^{\times})^{m}$. For each $i \in [m]$, let $P_i = \{C : f_C(\pi_i \circ \gamma) \ne 0\}$, where $\pi_i$ is the projection map onto the $i$-th co-ordinate. Note that each $P_i$ is a zero-pattern with respect to $\mathbf{f}_G$, witnessed by $\pi_i \circ \gamma$. By construction, we have $\cC - \cB = \bigcup P_i$, as required.  
\end{proof}

\begin{lemma}\label{abelian-countbyedges}
	For all integers $m,n$ with $n \ge 3$ and $0 \le m \le \binom{n}{2}$, the number of abelian-group-labellable simple biased graphs with vertex set $[n]$ and $m$ edges is at most $\binom{{\scriptsize\binom{n}{2}}}{m} \binom{2n!}{m}^m$.
\end{lemma}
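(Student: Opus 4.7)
The plan is to combine Lemma~\ref{abelian-zeropattern} with the R\'{o}nyai--Babai--Ganapathy zero-pattern bound (Theorem~\ref{zeropatterns}), and then sum over underlying graphs.

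First I will fix a simple graph $G = ([n], E)$ with $|E| = m$ and count abelian-group-labellable biased graphs of the form $(G,\cB)$. Writing $\cC$ for the set of cycles of $G$, Lemma~\ref{abelian-zeropattern} asserts that every such $\cB$ satisfies $\cC - \cB = \bigcup_{i=1}^{m} P_i$ for some ordered $m$-tuple $(P_1, \ldots, P_m) \in Z_{\bC}(\mathbf{f}_G)^m$. Since $\cB$ is determined by this tuple, the number of possibilities for $\cB$ is at most $|Z_{\bC}(\mathbf{f}_G)|^m$.

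Next I will bound $|Z_{\bC}(\mathbf{f}_G)|$ via Theorem~\ref{zeropatterns}. The polynomials in $\mathbf{f}_G$ are indexed by cycles of $G$, so $M := |\mathbf{f}_G| < 2(n-1)!$. Inspection of the definition of $f_C$ shows that each $f_C$ has degree at most $|C| \le n$, so we may take $D := n$. The relevant variables are the $N := m$ indeterminates $\{X_e : e \in E\}$. Thus $MD < 2n!$ and Theorem~\ref{zeropatterns} gives $|Z_{\bC}(\mathbf{f}_G)| \le \binom{MD}{N} \le \binom{2n!}{m}$. Summing the resulting bound $\binom{2n!}{m}^m$ over the $\binom{\binom{n}{2}}{m}$ choices of $G$ yields the stated estimate.

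I do not anticipate a real obstacle: the lemma is essentially a direct assembly of the two tools already set up. The only points requiring care are (i) confirming the parameters $M < 2(n-1)!$, $D \le n$, and $N = m$ that feed into Theorem~\ref{zeropatterns}, and (ii) observing that the correspondence $\cB \mapsto (P_1, \ldots, P_m)$ supplied by Lemma~\ref{abelian-zeropattern} need not be injective, but since we only seek an upper bound it suffices that $\cB$ is \emph{determined} by the tuple.
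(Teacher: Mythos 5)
Your proposal is correct and follows essentially the same route as the paper: fix $G$, apply Lemma~\ref{abelian-zeropattern} to encode $\cB$ by an $m$-tuple of zero-patterns, bound $|Z_{\bC}(\mathbf{f}_G)|$ by $\binom{2n!}{m}$ via Theorem~\ref{zeropatterns} with the same parameters $M < 2(n-1)!$, $D = n$, $N = m$, and sum over the $\binom{\binom{n}{2}}{m}$ graphs. No gaps.
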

\begin{proof}
	Since there are exactly $\binom{{\scriptsize\binom{n}{2}}}{m}$ simple graphs on $[n]$ with $m$ edges, it suffices to show that for each such graph $G$, there are at most $\binom{2n!}{m}^m$ abelian-group-labellable biased graphs on $G$. By Lemma~\ref{abelian-zeropattern}, for every such biased graph $(G, \cB)$, there are zero-patterns $P_1, \ldots, P_m \in Z_{\bC}(\mathbf{f}_G)$ such that $\cC-\cB = \bigcup P_i$, so the number of abelian-group-labellable biased graphs on $G$ is at most $\left|Z_{\bC}(\mathbf{f}_G)\right|^m$. By Theorem~\ref{zeropatterns} (applied here with $\bF=\bC$, $M=|\mathbf{f}_G|$, $N=m$, and $D = n$) and using that $|\mathbf{f}_G| < 2(n-1)!$, the number of zero-patterns arising from $\mathbf{f}_G$ is at most $\binom{2n!}{m}$, which concludes the proof.
\end{proof}

We are now ready to prove Theorem~\ref{abelian:precise}.

\begin{proof}[Proof of Theorem~\ref{abelian:precise}]
	Using Lemma~\ref{abelian-countbyedges} to bound the number of abelian-group-labellable simple biased graphs with a given number $m$ of edges, we obtain the required bound by summing over all possible values of $m$.
	Using that $\binom{2n!}{m} \le (2n!)^{\binom{n}{2}}$ for all $n \ge 3$ and $0 \le m \le \binom{n}{2}$, we find that the number of abelian-group-labellable simple biased graphs with vertex set $[n]$ is at most
	\begin{equation*}
		\sum_{m=0}^{\binom{n}{2}} \binom{\binom{n}{2}}{m} \binom{2n!}{m}^m
			\le \sum_{m=0}^{\binom{n}{2}} \binom{\binom{n}{2}}{m} (2n!)^{\binom{n}{2}m}
			\le (1+2n!)^{\binom{n}{2}^2}. \qedhere
	\end{equation*}
\end{proof}

While it seems difficult to construct a good model for a uniformly random biased clique on $n$ vertices, Theorem~\ref{main} suggests that simply taking a uniformly random subset of the Hamilton cycles as the set of balanced cycles may be a reasonable `approximation'. The remainder of this section shows that, under this significantly simplified model, almost all biased cliques are not labellable over any group, abelian or non-abelian. This can be interpreted as weak evidence for Conjecture~\ref{grouplabel}. 

For $n \ge 8$, call an $n$-vertex graph $R$ a \emph{diamond ring} if it has a Hamilton cycle $v_i: i \in \bZ_n$, and is constructed by choosing vertices $v_i,v_j$ with $j -i \notin \{-2,-1,0,1,2\}$, then adding the four edges $v_iv_{i+2},v_{i+1}v_{i+3},v_jv_{j+2}$ and $v_{j+1}v_{j+3}$. A diamond ring $R$ contains exactly four Hamilton cycles; write $H(R)$ for the set of Hamilton cycles in $R$. For each $C \in H(R)$ there will be a unique choice of the unordered pair $\{i,j\}$ that determines $R$ as described above; it follows that the number of diamond ring subgraphs of $K_n$ is $\tfrac{1}{4}\left(\tfrac{1}{2}(n-1)!\right)\left(\tfrac{1}{2}n(n-5)\right) = \tfrac{1}{16}n!(n-5)$, since there are $\tfrac{1}{2}(n-1)!$ Hamilton cycles and $\tfrac{1}{2}n(n-5)$ choices for $\{i,j\}$. It is easy to verify that if $R$ is a subgraph of a group-labellable biased graph, and three of the cycles in $H(R)$ are balanced, then so is the fourth.

\begin{lemma}\label{lemma:bad-diamonds}
	Let $(G,\BB)$ be a biased graph on $n$ vertices, and let $R$ be an $n$-vertex diamond ring in $G$. If $(G,\cB)$ is group-labellable, then $|\BB \cap H(R)| \neq 3$. 
\end{lemma}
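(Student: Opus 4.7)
The plan is to fix a group labelling $\gamma\colon E(G)\to \Gamma$ giving rise to $\cB$ and then compute the ``holonomy'' of each of the four Hamilton cycles in $R$ explicitly in terms of a small set of path products. Label the two diamonds of $R$ as $D_1, D_2$, with degree-$2$ vertices $a_i, b_i$ and degree-$3$ vertices $x_i, y_i$ in $D_i$, and let $P_a$, $P_b$ be the connecting paths from $a_1$ to $a_2$ and from $b_1$ to $b_2$. Each diamond $D_i$ contains exactly two internally disjoint Hamilton paths from $a_i$ to $b_i$: call these $P_i^+ = a_i x_i y_i b_i$ and $P_i^- = a_i y_i x_i b_i$. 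The four Hamilton cycles of $R$ are then $H_{\epsilon_1\epsilon_2} = P_1^{\epsilon_1} \cup P_b \cup P_2^{\epsilon_2} \cup P_a$ for $(\epsilon_1,\epsilon_2)\in\{+,-\}^2$.

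For any directed path $Q$ from $u$ to $v$, write $\sigma(Q)\in\Gamma$ for the product of edge labels along $Q$, taking $\gamma(e)$ or $\gamma(e)^{-1}$ according to whether the traversal agrees with the canonical orientation of $e$. Set $p_i^{\epsilon} := \sigma(P_i^\epsilon)$ (for $a_i\to b_i$), $\alpha := \sigma(P_a)$ (for $a_1\to a_2$) and $\beta := \sigma(P_b)$ (for $b_1\to b_2$). Traversing $H_{\epsilon_1\epsilon_2}$ starting at $a_1$ along $P_1^{\epsilon_1}$, then $P_b$, then $P_2^{\epsilon_2}$ reversed, then $P_a$ reversed, one computes its holonomy to be $p_1^{\epsilon_1}\,\beta\,(p_2^{\epsilon_2})^{-1}\alpha^{-1}$. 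Since $H_{\epsilon_1\epsilon_2}$ is balanced if and only if this holonomy equals $1_\Gamma$, balance is equivalent to the identity
\[
p_1^{\epsilon_1}\,\beta \;=\; \alpha\, p_2^{\epsilon_2}.
\]

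The key step is to observe that these four identities (over $(\epsilon_1,\epsilon_2)\in\{+,-\}^2$) are not independent: any three of them force the fourth. Indeed, among any three of the pairs $(\epsilon_1,\epsilon_2)$, two must share the same $\epsilon_1$, and comparing the corresponding identities yields $p_2^+ = p_2^-$; likewise two must share the same $\epsilon_2$, forcing $p_1^+ = p_1^-$. Once $p_1^+ = p_1^-$ and $p_2^+ = p_2^-$, all four identities reduce to the single equation $p_1^+\beta = \alpha p_2^+$, so the remaining one automatically holds. This contradicts $|\cB \cap H(R)| = 3$.

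The only subtlety I anticipate is the standard one with non-abelian $\Gamma$: the cycle holonomy depends on starting vertex up to conjugation and on orientation up to inversion, but the property of being $1_\Gamma$ is invariant under both. Fixing the starting vertex $a_1$ and the traversal order above for every Hamilton cycle circumvents this, after which the argument is a routine group-theoretic manipulation.
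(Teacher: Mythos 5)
Your proof is correct: the paper states this lemma without proof (the sentence preceding it merely asserts that in a group-labellable biased graph three balanced Hamilton cycles of a diamond ring force the fourth), and your holonomy computation---decomposing each $H_{\epsilon_1\epsilon_2}$ into the two diamond paths and the two connecting paths, then using pigeonhole plus left/right cancellation, which works for non-abelian $\Gamma$---is exactly the intended argument. One cosmetic slip: the two Hamilton paths $a_ix_iy_ib_i$ and $a_iy_ix_ib_i$ of a diamond are not internally disjoint (they share $x_i$, $y_i$ and the edge $x_iy_i$), but your argument never actually uses that claim.
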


Consider a random biased clique $(K_n, \randbal)$ obtained by including each Hamilton cycle of $K_n$ in $\randbal$ with probability $1/2$, independently of all other Hamilton cycles.

\begin{theorem}
	With high probability (i.e.\ with probability tending to 1 as $n$ tends to infinity) the biased clique $(K_n, \randbal)$ is not group-labellable.
\end{theorem}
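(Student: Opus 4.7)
The plan is a second-moment argument on
\[X = \sum_{R} X_R,\]
where the sum ranges over all $n$-vertex diamond rings $R \subseteq K_n$ and $X_R$ is the indicator of $|\randbal \cap H(R)| = 3$. Since each Hamilton cycle of $K_n$ lies in $\randbal$ independently with probability $1/2$, we have $\expect{X_R} = \binom{4}{3}2^{-4} = \tfrac{1}{4}$ for every such $R$; and by Lemma~\ref{lemma:bad-diamonds}, $\{X \ge 1\}$ is contained in the event that $(K_n,\randbal)$ is not group-labellable. So it suffices to show $\prob{X = 0} \to 0$, which I would do via Chebyshev's inequality.

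The first calculation is to count $D$, the total number of $n$-vertex diamond rings in $K_n$. Fix a Hamilton cycle $H = v_1v_2\dotsb v_nv_1$; every diamond ring $R$ with $H \in H(R)$ is obtained by choosing two vertex-disjoint intervals of four consecutive vertices of $H$ (one per diamond) and, for each chosen interval $v_iv_{i+1}v_{i+2}v_{i+3}$, adjoining the two chord edges $v_iv_{i+2}$ and $v_{i+1}v_{i+3}$. This parametrization is essentially a bijection because each Hamilton path through a diamond uses three consecutive edges of $H$, identifying the diamond's four vertices with four consecutive vertices of $H$. There are $\Theta(n^2)$ such pairs of intervals, so $|\{R : H \in H(R)\}| = \Theta(n^2)$; double-counting via $|H(R)| = 4$ then yields $D = \Theta(n^2 (n-1)!)$, and in particular $\expect{X} = D/4 \to \infty$.

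For the variance, note that $X_R$ and $X_{R'}$ are independent whenever $H(R) \cap H(R') = \varnothing$, so only pairs of diamond rings sharing a Hamilton cycle contribute to the covariance sum. By the previous count, each ring $R$ shares a Hamilton cycle with at most $4 \cdot \Theta(n^2) = O(n^2)$ other rings, and each such covariance is bounded in absolute value by $1$. Hence $\var{X} = O(D n^2)$, and Chebyshev's inequality gives
\[\prob{X = 0} \le \frac{\var{X}}{\expect{X}^2} = O\!\left(\frac{n^2}{D}\right) = O\!\left(\frac{1}{(n-1)!}\right) \to 0.\]

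The main technical subtlety is ensuring that the parametrization in the diamond ring count is indeed (essentially) a bijection, which requires handling the degenerate case in which the two connecting paths of $R$ have length $0$ (so that the two diamonds share a vertex). These degenerate diamond rings form an asymptotically negligible fraction of all $R$ and can be discarded from the count without affecting the bound, after which the second-moment computation is routine.
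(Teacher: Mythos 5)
Your proposal is correct and follows essentially the same route as the paper: a second-moment (Chebyshev) argument on the number of bad diamond rings, combined with Lemma~\ref{lemma:bad-diamonds}. The only differences are cosmetic --- the paper counts all diamond rings exactly (getting $\tfrac{1}{16}n!(n-5)$, including the degenerate ones with length-$0$ connecting paths) and bounds the dependent pairs by $\tfrac{1}{2}(n-1)!\,n^4$, whereas you work with $\Theta$/$O$ estimates after discarding the degenerate rings, which is equally valid.
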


\begin{proof}
	Call an $n$-vertex diamond ring subgraph $R$ of $K_n$ bad if $|\randbal \cap H(R)| = 3$, and write $X$ for the (random) number of bad diamond ring subgraphs in $(K_n, \randbal)$. We show, using a straightforward application of the second moment method, that ${X > 0}$ with high probability; the theorem then follows immediately from Lemma~\ref{lemma:bad-diamonds}.
	
	Let $X_R = 1$ if $R$ is bad and $X_R = 0$ otherwise; clearly $X = \sum X_R$, where the sum is over all $n$-vertex diamond ring subgraphs of $K_n$. The clique $K_n$ contains $\frac{1}{16}n!(n-5)$ diamond rings. Each of these is bad with probability $\binom{4}{3}2^{-4}$, so $\expect{X} = \frac{1}{64}n!(n-5) \sim \frac{1}{64}(n+1)!$.
	
	Let $R$ and $R'$ be two diamond ring subgraphs of $K_n$. $X_R$ and $X_{R'}$ are dependent if and only if $R$ and $R'$ share at least one Hamilton cycle. The number of dependent pairs is therefore at most $\frac{1}{2}(n-1)!n^4$. It follows that
	\begin{equation*}
		\var{X}
			\le \expect{X} + \frac{1}{2}(n-1)! n^4
			= o\left(\left(\expect{X}\right)^2\right),
	\end{equation*}
	and hence that $X > 0$ with high probability.
\end{proof}

\bibliographystyle{alpha}
\bibliography{biasedcliques-2020}

\begin{thebibliography}{RBG01}

\bibitem[RBG01]{RonyaiBabaiGanapathy2001}
Lajos R{\'o}nyai, L{\'a}szl{\'o} Babai, and Murali Ganapathy.
\newblock On the number of zero-patterns of a sequence of polynomials.
\newblock {\em Journal of the American Mathematical Society}, 14(3):717--735,
  2001.

\bibitem[Sam15]{Samotij2015}
Wojciech Samotij.
\newblock Counting independent sets in graphs.
\newblock {\em European Journal of Combinatorics}, 48:5--18, 2015.

\bibitem[Zas89]{Zaslavsky1989}
Thomas Zaslavsky.
\newblock Biased graphs. {I}. {B}ias, balance, and gains.
\newblock {\em Journal of Combinatorial Theory, Series B}, 47:32--52, 1989.

\end{thebibliography}

\end{document}